\documentclass[reqno,12pt,centertags]{amsart}
\usepackage{amsmath,amsthm,amscd,amssymb,latexsym,upref,stmaryrd}
\usepackage[cp1251]{inputenc}
\usepackage[english]{babel}
\usepackage{times}
\usepackage{amsfonts}
\usepackage[numbers,sort&compress]{natbib}
% Here you can turn off all labels
%\usepackage{showkeys}
%\usepackage[nomsgs,ignoreunlbld]{refcheck}
\usepackage{hyperref}
%%%%%%%%%
%\newcommand{\arxiv}[1]{\href{http://arxiv.org/#1}{arXiv:#1}}
\newcommand*{\mailto}[1]{\href{mailto:#1}{\nolinkurl{#1}}}
\setlength{\baselineskip}{28pt}
\textheight 21 true cm
\textwidth 15 true cm

\usepackage{amsmath}
\usepackage{amsthm}

\newtheorem{theorem}{Theorem}[section]
\newtheorem{lemma}{Lemma}[section]

\newtheorem{proposition}{Proposition}[section]
\newtheorem{ip}{Inverse Problem}[section]

\numberwithin{equation}{section}
\begin{document}

\thispagestyle{empty}

\noindent{\large\bf  Local solvability and stability of the generalized inverse Robin-Regge problem with complex coefficients}
\\

\noindent {\bf  Xiao-Chuan Xu}\footnote{School of Mathematics and Statistics, Nanjing University of Information Science and Technology, Nanjing, 210044, Jiangsu,
People's Republic of China, {\it Email:
xcxu@nuist.edu.cn}} ,
\noindent {\bf Natalia Pavlovna Bondarenko}\footnote{Department of Applied Mathematics and Physics, Samara National Research University,
Moskovskoye Shosse 34, Samara 443086, Russia,}
\footnote{Department of Mechanics and Mathematics, Saratov State University,
Astrakhanskaya 83, Saratov 410012, Russia,
Email: {\it bondarenkonp@info.sgu.ru}
}
\\

\noindent{\bf Abstract.}
{We prove local solvability and stability of the inverse Robin-Regge problem in the general case, taking eigenvalue multiplicities into account. We develop the new approach based on the reduction of this inverse problem to the recovery of the Sturm-Liouville potential from the Cauchy data.}

\medskip
\noindent {\it Keywords: }{Robin-Regge problem; Cauchy data; Inverse spectral problem; Local solvability; Stability; Multiple eigenvalues}

\medskip
\noindent{\it 2010 Mathematics Subject Classification:} 34A55; 34B08; 34L40; 35R30

\section{Introduction}

Consider the following generalized Robin-Regge problem $L(q,h,\alpha,\beta)$:
\begin{equation}\label{1}
{-y^{\prime \prime}(x)+q(x)y(x)=\lambda^2y(x), \quad 0<x<a},
\end{equation}
\begin{equation}\label{2}
y'(0)-hy(0)=0,
\end{equation}
\begin{equation}\label{3}
y^{\prime}(a)+(i\lambda\alpha+\beta)y(a)=0,
\end{equation}
where $\lambda$ is spectral parameter, the complex-valued potential $q$ belongs to $L^2(0,a)$, $h,\beta\in \mathbb{C}$ and $\alpha>0$.

The problem $L(q,h,\alpha,\beta)$ arises in various models of mathematical physics, such as the problem of small transversal vibrations of a smooth inhomogeneous string subject to viscous damping \cite{MP,MP1}, the resonance scattering problem \cite{SB}, and the problem of determining the sharp of human vocal tract \cite{AMS}.

This paper is concerned with the inverse spectral problem that consists in recovery of the potential $q(x)$ and the coefficients of the boundary conditions \eqref{2}-\eqref{3} from the eigenvalues of $L(q, h, \alpha, \beta)$. In the theory of inverse spectral problems, the most complete results were obtained for operators induced by the Sturm-Liouville equation~\eqref{1} with boundary conditions independent of the spectral parameter (see the monographs \cite{VM, Lev, PT, FY} and references therein). In particular, Borg~\cite{BO} has proved that the real-valued potential $q$ is uniquely specified by the two spectra $\{ \lambda_{n,\nu} \}$, $\nu = 0, 1$, of the problems $L_\nu(q, h)$ given by \eqref{1}-\eqref{2} and the boundary condition $y^{(\nu)}(a) = 0$, $\nu = 0, 1$. Moreover, Borg~\cite{BO} obtained local solvability and stability of this inverse problem. Recently, the results of Borg were generalized by Buterin and Kuznetsova \cite{BK} to the case of the complex-valued potential $q$. The latter case is more difficult for investigation, since the spectra $\{ \lambda_{n,\mu} \}$ can contain multiple eigenvalues, which can split under a small perturbation.

However, the presence of the spectral parameter $\lambda$ in the boundary condition causes a significant qualitative difference of problem~\eqref{1}-\eqref{3} from the classical Sturm-Liouville problems. Namely, in order to recover the potential $q$ of the problem $L(q, h, \alpha, \beta)$, one needs only one spectrum instead of two spectra. This can be easily shown by the reduction of the inverse Robin-Regge problem to the Borg inverse problem by two spectra (see, e.g., \cite{X}). Nevertheless, the method of reduction to the Borg problem is inconvenient for studying various issues of the inverse problem theory, in particular, of local solvability and stability of the inverse problem. Therefore, the Regge-type problems require development of new methods for their investigation.

Some aspects of the inverse Regge-type problems were studied in the earlier papers \cite{RT1, KN1, KN2}. Important advances in the theory of the Regge-type problems have been achieved by Yurko \cite{Yur}, who considered various types
of inverse problems with linear and also with polynomial dependence on the spectral parameter in the boundary conditions. For the problem $L(q,h,\alpha,\beta)$ with real coefficients,  M\"{o}ller and Pivovarchik \cite{MP} proved the uniqueness and existence of the inverse problem solution.
In \cite{PV}, the Dirichlet-Regge inverse problem was studied with the boundary condition \eqref{2} replaced by $y(0) = 0$. Later on, Xu \cite{X} considered the problem $L(q,h,\alpha,\beta)$ with complex coefficients, where the uniqueness theorems are proved with reconstruction algorithms being provided.  In addition, Xu \cite{X} studied local solvability and stability of the inverse problem under some restrictions on eigenvalue perturbations.

In this paper, we suggest a new approach to the inverse Regge-type problems. We reduce the inverse Robin-Regge problem to the recovery of the Sturm-Liouville potential by the so-called Cauchy data, by using the special exponential Riesz basis. The ideas of this approach appeared in the papers by Bondarenko \cite{BP2, BP3}. Our method is convenient for dealing with multiple eigenvalues. As it was pointed out in \cite{KT}, our approach, in fact, provides the first constructive algorithm for interpolation of the Weyl function by its values in a countable set of points. We also mention that the reduction to the inverse problem by the Cauchy data has been recently applied to the inverse transmission eigenvalue problem by Buterin et al \cite{BCK}.

The main result of this paper is the following theorem on the local solvability and stability of the inverse Robin-Regge problem.
Denote
$$\mathbb{Z}_0=\mathbb{Z},\quad  \mathbb{Z}_1=\mathbb{Z}\setminus\{0\},\quad \mathbb{Z}_j^-=\mathbb{Z}_j\setminus\{1\}, \quad  j=0,1.$$
 It was known \cite{MP,X} that the eigenvalues, which can be denoted by $\{\lambda_n\}_{n\in \mathbb{Z}_j}$,  of the problem $L(q,h,\alpha,\beta)$ with $(-1)^{j+1}(\alpha-1)<0$ have the following asymptotics
  \begin{equation}\label{2.7}
\lambda_{n}=\frac{(|n|-\frac{j+1}{2})\pi}{a} {\rm sgn} n+\frac{i}{2 a} \ln \left|\frac{\alpha+1}{1-\alpha}\right|+\frac{P}{n}+\frac{\gamma_{j,n}}{n},\quad j=0,1,
\end{equation}
where $\{\gamma_{j,n}\}\in l_2$, and
\begin{equation}\label{nn}
P=\frac{1}{\pi}\left(\omega-\frac{\beta}{\alpha^{2}-1}\right),\quad \omega=h+\frac{1}{2}\int_0^aq(s)ds.
\end{equation}
In our notations, we agree that $j=0$ corresponds to the case $\alpha>1$ and $j=1$, to the case $\alpha<1$.
Consider the inverse problem that consists in recovery of $q$, $h$ and $\alpha\ne1$ from the known $\beta$ and the set $\{ \lambda_n \}_{n \in \mathbb Z_j^-}$ of all the eigenvalues except one. Note that the numeration of the eigenvalues is not uniquely fixed by the asymptotics \eqref{2.7}, so every eigenvalue can be excluded.

\begin{theorem}\label{th4.1}
Let $\{\lambda_n\}_{n\in \mathbb{Z}_j^{-}}$ $(j=0,1)$ be the eigenvalues  of the problem $L(q,h,\alpha,\beta)$ with complex-valued $q\in L^2(0,a)$, $h,\beta\in \mathbb{C}$ and $(-1)^{j+1}(\alpha-1)<0$. Then there exists $\varepsilon>0 $ (depending on the problem $L(q,h,\alpha,\beta)$) such that for any sequence $\{\tilde{\lambda}_n\}_{n\in \mathbb{Z}_j^-}$ satisfying
  \begin{equation}\label{4.1}
 \Lambda:= \sqrt{\sum_{n\in \mathbb{Z}_j^-}(n^2+1)|\lambda_n-\tilde{\lambda}_n|^2}\le\varepsilon,
  \end{equation}
there exist  unique $\tilde{q}\in L^2(0,a)$ and $\tilde{h}\in \mathbb{C}$ such that  $\{\tilde{\lambda}_n\}_{n\in \mathbb{Z}_j^-}$ are the eigenvalues  of the problem $L(\tilde{q},\tilde{h},{\alpha},{\beta})$. Moreover,
 \begin{equation}\label{jia}
   \|\tilde{q}-q\|_{L^2} \le C \Lambda,\quad |\tilde{h}-h|\le C \Lambda,
 \end{equation}
 where $C>0$ depends only on the problem $L(q,h,\alpha,\beta)$.
\end{theorem}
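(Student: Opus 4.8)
The plan is to convert the one-spectrum inverse Robin--Regge problem into a reconstruction of the Cauchy data $\bigl(\varphi(a,\lambda),\varphi'(a,\lambda)\bigr)$ of the underlying Sturm--Liouville equation, and then to recover $(q,h)$ from these Cauchy data by a linear ``main equation'' posed in a space adapted to a perturbed exponential Riesz basis. First I would introduce the solution $\varphi(x,\lambda)$ of \eqref{1} with $\varphi(0,\lambda)=1$, $\varphi'(0,\lambda)=h$, so that $\varphi$ automatically satisfies \eqref{2} and the eigenvalues of $L(q,h,\alpha,\beta)$ are exactly the zeros of the characteristic function $\Delta(\lambda)=\varphi'(a,\lambda)+(i\lambda\alpha+\beta)\varphi(a,\lambda)$. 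Since the initial data are independent of $\lambda$, both $\varphi(a,\lambda)$ and $\varphi'(a,\lambda)$ are \emph{even} entire functions of $\lambda$ of exponential type $a$, whence
\[
\varphi(a,\lambda)=\frac{\Delta(\lambda)-\Delta(-\lambda)}{2i\alpha\lambda},\qquad
\varphi'(a,\lambda)=\frac{\Delta(\lambda)+\Delta(-\lambda)}{2}-\beta\,\varphi(a,\lambda).
\]
Because $\alpha$ and $\beta$ are known, the Cauchy data are obtained from $\Delta$, and $\Delta$ is reconstructed from its zeros $\{\lambda_n\}$ through the canonical product whose leading asymptotics are fixed by the known $\alpha$. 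This also explains why one eigenvalue may be dropped: the missing zero is pinned down by the prescribed leading term of $\Delta$, so $\{\lambda_n\}_{n\in\mathbb Z_j^-}$ suffices, and in the perturbed problem the corresponding $\tilde\lambda_1$ is produced by the construction; the full system including the recovered frequency is what enters below.

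Next I would set up the recovery of $(q,h)$ from the Cauchy data, following the ideas of \cite{BP2,BP3}. Writing the transformation operator representation $\varphi(a,\lambda)=\cos\lambda a+\int_0^a K(a,t)\cos\lambda t\,dt$ (and the analogous formula for $\varphi'$), the key analytic input is that the system $\{e^{i\lambda_n t}\}_{n\in\mathbb Z_j}$ is a Riesz basis of $L^2(-a,a)$. Indeed, by \eqref{2.7} the frequencies lie at $\tfrac{\pi n}{a}$ up to a common imaginary shift (a bounded invertible multiplier on a finite interval) and an $\ell_2$ error, at critical density for the interval of length $2a$, so a Kadec--Bari type perturbation theorem applies. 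Using this basis to reduce the contour-integral representations of the solutions to absolutely convergent series indexed by the spectrum, I obtain a linear main equation $(I+\mathcal R(x))\Psi(x)=\Psi^0(x)$ in a Banach space $B$ of sequences, with $\mathcal R(x)$ and $\Psi^0(x)$ built explicitly from $\{\lambda_n\}$ and the Cauchy data. Unique solvability of this equation for the given problem is equivalent to the invertibility of $I+\mathcal R(x)$ (uniformly in $x$) and yields $q$ on $(0,a)$; the same construction returns $h$, consistently with the relations \eqref{nn} linking $h$, $\int_0^a q$ and the constant $P$ read off from the data.

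For local solvability and stability I would pass to the perturbed data $\{\tilde\lambda_n\}$ satisfying \eqref{4.1}. When $\Lambda$ is small, the quantitative form of the basis-perturbation theorem guarantees that $\{e^{i\tilde\lambda_n t}\}$ is again a Riesz basis, quadratically close to $\{e^{i\lambda_n t}\}$ with uniformly bounded basis constants; hence the perturbed operator $I+\tilde{\mathcal R}(x)$ stays boundedly invertible with $\|(I+\tilde{\mathcal R}(x))^{-1}\|$ controlled independently of the perturbation and of $x$. Here the weight $n^2+1$ in \eqref{4.1} is precisely what is needed: the series defining $\tilde{\mathcal R}-\mathcal R$ and $\tilde\Psi^0-\Psi^0$ carry factors $\sim|\lambda_n|\,|\lambda_n-\tilde\lambda_n|\sim |n|\,|\lambda_n-\tilde\lambda_n|$, so $\sum (n^2+1)|\lambda_n-\tilde\lambda_n|^2=\Lambda^2$ bounds them in operator (Hilbert--Schmidt) norm by $O(\Lambda)$. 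Solving the perturbed main equation gives the existence and uniqueness of $(\tilde q,\tilde h)$ with the prescribed spectrum, and subtracting the two main equations produces $\|\tilde q-q\|_{L^2}+|\tilde h-h|\le C\Lambda$, which is \eqref{jia}.

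The step I expect to be the main obstacle is the treatment of multiple eigenvalues, the feature that distinguishes Theorem~\ref{th4.1} from the results with simple-spectrum restrictions. When several $\lambda_n$ coincide, the plain system $\{e^{i\lambda_n t}\}$ degenerates and must be replaced by a generalized basis that adjoins derivative elements $\{t^k e^{i\lambda_n t}\}$, with the reconstruction using the corresponding $\lambda$-derivatives $\tfrac{d^k}{d\lambda^k}\varphi(a,\lambda)$ at the multiple nodes. The delicate point is that an arbitrarily small perturbation may split a multiple eigenvalue into nearby simple ones, changing the combinatorial structure of the generalized basis; the crux is to show that the basis constants, and therefore $\|(I+\tilde{\mathcal R}(x))^{-1}\|$, remain \emph{uniformly} bounded across all such splittings and mergings, so that the constant $C$ in \eqref{jia} and the threshold $\varepsilon$ are genuinely independent of the perturbation. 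Establishing this uniformity -- essentially a stability estimate for the generalized exponential Riesz basis under collisions of frequencies -- is where the bulk of the technical work will lie.
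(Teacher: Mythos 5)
Your skeleton matches the paper's in broad outline: reduce the spectrum, via the characteristic function $\Delta(\lambda)=\varphi'(a,\lambda)+(i\lambda\alpha+\beta)\varphi(a,\lambda)$, to the recovery of Cauchy data, and exploit exponential Riesz bases with the weight $n^2+1$ to get linear estimates. But there is a genuine gap exactly at the point you yourself flag as ``where the bulk of the technical work will lie'': the treatment of a multiple eigenvalue that splits into nearby simple ones under perturbation. This cannot be deferred --- it is the main content of Theorem~\ref{th4.1} (the introduction notes that the earlier proofs in \cite{X} fail precisely here and are valid only when the multiplicities of $\{\tilde\lambda_n\}$ coincide with those of $\{\lambda_n\}$). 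Worse, the uniformity you propose to establish is false as stated: if a double eigenvalue $\lambda_k=\lambda_{k+1}$ splits into distinct values $\tilde\lambda_k\ne\tilde\lambda_{k+1}$ with $|\tilde\lambda_k-\tilde\lambda_{k+1}|$ arbitrarily small, the raw perturbed system contains the two almost-parallel elements $e^{i\tilde\lambda_k t}$ and $e^{i\tilde\lambda_{k+1}t}$, whose Gram matrix degenerates as the split points coalesce; hence the Riesz constants of $\{e^{i\tilde\lambda_n t}\}$ admit no bound uniform over all splittings, and an argument resting on uniform invertibility of $I+\tilde{\mathcal R}(x)$ for that system cannot close. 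The stability that actually holds is subtler: the right-hand sides $\tilde w_n$ at nearly coincident nodes are values of an entire function and hence strongly correlated, and this correlation must be used.

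The paper's resolution is a concrete mechanism absent from your proposal: Hermite interpolation (divided differences). For each original eigenvalue $\lambda_k$ of multiplicity $m_k$, one forms the polynomials $E_k(t,\lambda)$, $F_k(\lambda)$ of degree at most $m_k-1$ interpolating $e^{i\lambda t}$ and $-f(\lambda)$ (with derivatives) at the perturbed nodes $\{\tilde\lambda_n\}_{n\in M_k}$. Since such polynomials are determined by $m_k$ Hermite conditions, the perturbed moment equations \eqref{kam1} are exactly equivalent to the conditions \eqref{kam4} at the single original point $\lambda_k$, i.e.\ to the system \eqref{kam6} with modified elements $\tilde{\tilde u}_{k+\nu}(t)=E_k^{(\nu)}(t,\lambda_k)$ and data $\tilde{\tilde w}_{k+\nu}=F_k^{(\nu)}(\lambda_k)$. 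The Marletta--Weikard estimate (Lemma~\ref{l2.3}) then shows these are $O(\Lambda)$-close to the \emph{fixed} unperturbed generalized basis $u_n=(it)^\nu e^{i\lambda_n t}$ and to $w_n$ (Lemma~\ref{l4.1}), so only the Riesz constants of the unperturbed system are ever invoked, and the moment problem is solved (Proposition~4.1 of \cite{X}) to produce $\tilde M,\tilde N$ with \eqref{4.6}. From there the paper does not reconstruct $\Delta$ by a canonical product (whose stability in the weighted $\ell^2$ norm would itself need proof, and which is unnecessary since the basis indexed by $\mathbb Z_j^-$ is already complete without the excluded eigenvalue); it passes instead through the perturbed Cauchy data \eqref{x12} and the separate stability theorem for the Cauchy-data inverse problem (Theorem~\ref{thca}, itself reduced to the Weyl-data result of \cite{BP1}). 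Until you supply an analogue of this interpolation step, your outline does not prove the theorem in the generality claimed.
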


An important difference of this theorem comparing with the results of \cite{X} is that in \cite{X} the following stability estimates are obtained:
\begin{equation*}
   \|\tilde{q}-q\|_{L^2}<C \Lambda^{1/p},\quad |\tilde{h}-h|\le C \Lambda^{1/p}
 \end{equation*}
with the additional constant $p\ge1$ depending on $q(x)$ and $h$. Moreover, the proofs in \cite{X} contain a mistake related with eigenvalue multiplicities. In fact, the results of \cite{X} are valid only in the special case when the multiplicities of $\{ \tilde \lambda_n \}$ coincide with the multiplicities of $\{ \lambda_n \}$. But under a small perturbation, multiple eigenvalues of the problem $L(q, h, \alpha, \beta)$ can split into smaller groups. In the present paper, we take this effect
into account and prove Theorem~\ref{th4.1} in the general case, without any restrictions on the eigenvalue multiplicities. Moreover, our new method allows us to obtain the improved estimate~\eqref{jia} without $p$.

The paper is organized as follows. In Section~2, we provide the definition of the Cauchy data and prove the local solvability and stability of the inverse problem by the Cauchy data (Theorem~\ref{thca}). This theorem plays an auxiliary role in this paper, but also can be considered as a separate result. In Section~3, the proof of the main Theorem~\ref{th4.1} is provided.

\section{Inverse problem by the Cauchy data}

In this section, we  prove an auxiliary theorem on the local solvability and stability of the inverse problem by the Cauchy data.

Let $\varphi(x,\lambda)$ be the solution of (\ref{1}) with the initial values $\varphi(0,\lambda)=1,\varphi'(0,\lambda)=h$.
It is well known that
\begin{equation}\label{z1}
  \varphi(x,\lambda)=\cos (\lambda x)+\int_{0}^{x}K(x,t)\cos(\lambda t)dt,
\end{equation}
where $K(x,t)$ is a two variable continuous function with first partial derivatives, satisfying $K_t(a,\cdot),K_x(a,\cdot)\in L^2(0,a)$, and $  K(a,a)=\omega.$
Using (\ref{z1}),  we have
\begin{equation}\label{2.1}
  \varphi(a,\lambda)=\cos (\lambda a)+\frac{\omega\sin(\lambda a)}{\lambda}-\int_{0}^aK_t(a,t)\frac{\sin (\lambda t)}{\lambda}dt,
\end{equation}
\begin{equation}\label{2.2}
  \varphi'(a,\lambda)=-\lambda\sin (\lambda a)+{\omega\cos(\lambda a)}+\int_{0}^aK_x(a,t)\cos (\lambda t)dt.
\end{equation}
%Denote by $B_c^2$ with $c>0$ the Paley-Wiener class:
%\begin{equation*}
%  B_c^2:=\{f\in L^2(\mathbb{R}): f \text{ is an entire function of exponential type not greater than } c\}.
%\end{equation*}
%, which consists of entire function of exponential type not greater than $c$, belonging to $L^2(\mathbb{R})$.
 The set $\{K_t(a,t), K_x(a,t),\omega\}$ is called the Cauchy data for $q$ and $h$. 
 We shall consider the following inverse problem.

 \begin{ip}\label{ipc}
 Given the Cauchy data $\{K_t(a,t), K_x(a,t),\omega\}$, find the potential $q(x)$ and $h$.
 \end{ip}
 
We remark here, when $q$ and $h$ are real, Rundell and Sacks \cite{RS1} gave the numerical reconstruction algorithm for  Problem \ref{ipc}, and applied the technique to the inverse resonance problem \cite{RS}. We shall consider the local solvability and stability for Problem \ref{ipc} with complex  $q$ and $h$.

\begin{theorem}\label{thca}
Let $q(x)$ be a fixed complex-valued function from $L^2(0,a)$, and let $h\in \mathbb{C}$ be a fixed number. Denote by $\{K_1,K_2,\omega\}$  the corresponding Cauchy data. Then there exists $\varepsilon>0$ (depending only on $q$ and $h$) such that, for any functions $\{\tilde{K}_1,\tilde{K}_2\}$ satisfying
\begin{equation}\label{cau}
\Xi:=\max\{\|\tilde{K}_1-K_1\|_{L^2(0,a)},\|\tilde{K}_2-K_2\|_{L^2(0,a)}\}\le \varepsilon,
\end{equation}
there exists a unique  function $\tilde{q}\in L^2 (0,a)$ such that $\{\tilde{K}_1,\tilde{K}_2,\omega\}$ are the Cauchy data for $\tilde{q}$ and $\tilde{h}=\omega-\frac{1}{2}\int_0^a\tilde{q}(x)dx$. Moreover,
\begin{equation}\label{cau1}
 \|\tilde{q}-q\|_{L^2(0,a)} \le C\Xi,\quad |\tilde{h}-h| \le C\Xi,
\end{equation}
 where $C$ depends only on $q$ and $h$.
\end{theorem}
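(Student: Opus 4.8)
The plan is to recover $\tilde q$ (which then determines $\tilde h = \omega - \tfrac12\int_0^a \tilde q$, since $\omega$ is held fixed) by reconstructing the whole transformation kernel $\tilde K(x,t)$ on the triangle $0\le t\le x\le a$ from its Cauchy data $\{\tilde K_1,\tilde K_2,\omega\}$ prescribed on the edge $x=a$, and then reading off the potential from the diagonal via $\tilde q(x)=2\frac{d}{dx}\tilde K(x,x)$ (consistent with the diagonal relation $\tilde K(x,x)=\tilde h+\tfrac12\int_0^x\tilde q$ and $\tilde K(a,a)=\omega$). The reconstruction itself I would carry out through a linear Gelfand--Levitan-type main equation built \emph{directly} from the given functions $K_1,K_2$ via the representations \eqref{2.1}--\eqref{2.2}, rather than by first extracting eigenvalues and norming constants from $\varphi(a,\lambda)$ and $\varphi'(a,\lambda)$. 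This is the point at which the method of \cite{BP2,BP3} pays off: it lets every perturbation be measured in the $L^2$-norm of $K_1,K_2$ and keeps eigenvalue multiplicities out of the argument entirely.

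Concretely, I would set up a main equation of the form $(I+\mathcal H)\psi=\phi$ in a suitable Hilbert space (an $L^2$-space of kernels, or the sequence space furnished by the special exponential Riesz basis), where both the bounded operator $\mathcal H$ and the right-hand side $\phi$ are assembled from the Cauchy data through explicit, boundedly linear formulas coming from \eqref{z1}, \eqref{2.1} and \eqref{2.2}. The first block of estimates to establish is the Lipschitz dependence of these objects on the data, namely $\|\mathcal H-\tilde{\mathcal H}\|\le C\Xi$ and $\|\phi-\tilde\phi\|\le C\Xi$, with $\Xi$ as in \eqref{cau}; these are routine once the assembly maps $K_1\mapsto\varphi(a,\cdot)$ and $K_2\mapsto\varphi'(a,\cdot)$ are seen to be bounded, since $\Xi$ already controls the $L^2$-perturbation of the integral terms in \eqref{2.1}--\eqref{2.2}.

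Solvability and stability then rest on invertibility of $I+\mathcal H$. For the reference data this follows from the uniqueness theorem for Problem \ref{ipc} (established in \cite{X}): uniqueness forces the homogeneous main equation to have only the trivial solution, so $I+\mathcal H$, being a compact perturbation of the identity, is boundedly invertible. Because invertibility is open in the operator norm, the bound $\|\mathcal H-\tilde{\mathcal H}\|\le C\Xi$ yields a uniform estimate $\|(I+\tilde{\mathcal H})^{-1}\|\le 2\|(I+\mathcal H)^{-1}\|$ for all $\Xi\le\varepsilon$ with $\varepsilon$ small, hence existence and uniqueness of the solution $\tilde\psi$ together with the kernel estimate $\|\tilde\psi-\psi\|\le C\Xi$. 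This is precisely where the threshold $\varepsilon$, depending only on $q,h$ through $\|(I+\mathcal H)^{-1}\|$, enters.

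The last and most delicate step is transferring the kernel estimate through the nonlinear reconstruction formula to the stated conclusions \eqref{cau1}, and this I expect to be the main obstacle. The map $\tilde\psi\mapsto\tilde K(x,t)$ produces the kernel, but the potential is obtained by differentiating its trace on the diagonal, $\tilde q=2\frac{d}{dx}\tilde K(x,x)$, while the prescribed data are only $L^2$; so I must show that the solution kernel has exactly the regularity guaranteed in \eqref{z1} (namely $\tilde K_t(a,\cdot),\tilde K_x(a,\cdot)\in L^2$ and $\frac{d}{dx}\tilde K(x,x)\in L^2(0,a)$) and that the linear map sending the kernel to $\frac{d}{dx}\tilde K(x,x)$ is bounded into $L^2$ with a constant depending only on the reference problem. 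Combining this boundedness with $\|\tilde\psi-\psi\|\le C\Xi$ gives $\|\tilde q-q\|_{L^2}\le C\Xi$, whence $\tilde h-h=-\tfrac12\int_0^a(\tilde q-q)$ gives $|\tilde h-h|\le C\Xi$; finally one verifies that the $\tilde q$ so produced indeed has $\{\tilde K_1,\tilde K_2,\omega\}$ as its Cauchy data, closing the argument. Carrying out this $L^2$-level regularity and boundedness analysis, without passing to smoother norms, is exactly what removes the exponent $1/p$ present in \cite{X}.
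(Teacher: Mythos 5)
Your overall architecture (a linear main equation, Fredholm invertibility plus openness of invertibility, then an $L^2$-level analysis of the reconstruction of $q$ from the diagonal) is the right skeleton for results of this type; it is essentially the skeleton of the stability theorem in \cite{BP1} that one needs here. But your proposal has a genuine gap at its central step, namely the claim that the operator $\mathcal H$ and the right-hand side $\phi$ can be ``assembled from the Cauchy data through explicit, boundedly linear formulas'' so that $\|\mathcal H-\tilde{\mathcal H}\|\le C\Xi$ and $\|\phi-\tilde\phi\|\le C\Xi$ become ``routine.'' What depends linearly on $(K_1,K_2)$ is only the pair of entire functions $\eta_1,\eta_2$ of \eqref{ca2.1}--\eqref{ca2.2}. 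Every standard main equation for this problem (Gelfand--Levitan, Marchenko, or the contour-integral equation of Yurko's method, which is the one underlying \cite{BP1}) is built not from $(\eta_1,\eta_2)$ but from the Weyl function $M=\eta_1/\eta_2$, equivalently from its poles $z_n$ and (generalized) residues $M_n$; the map from the Cauchy data to these objects is nonlinear, and its local Lipschitz continuity is precisely the difficult part. The difficulty sits exactly where you declare the step routine: near the zeros of $\eta_2$, which may be multiple and may split into clusters of simple zeros under an arbitrarily small $L^2$-perturbation of $K_2$, so that residues cannot be compared termwise. Controlling this requires a Rouch\'e argument, lower bounds for $\eta_2$ and $\dot\eta_2$ near and away from its zeros, an estimate of $M-\tilde M$ on a fixed contour $\partial\Gamma_0$ enclosing all possibly-multiple zeros (instead of pole-by-pole comparison), and a Bessel-inequality summation of Fourier coefficients of $\tilde K_i-K_i$ to obtain the weighted $\ell^2$ bound. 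This is the content of the paper's Lemma~\ref{leap}, which is what makes the quoted stability result (Proposition~\ref{proa}, from \cite{BP1}) applicable; your proposal assumes this reduction away, and with it the actual work of the proof.

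Two secondary points. First, if by a ``main equation built directly from $K_1,K_2$'' you instead mean sideways continuation of the wave equation satisfied by the kernel $K(x,t)$, as in Rundell--Sacks \cite{RS1}, then the problem is not of the form $(I+\mathcal H)\psi=\phi$ with $\mathcal H$ determined by the data: the unknown potential $q$ enters as a coefficient of the hyperbolic equation and is coupled to the solution through the diagonal, so the problem is genuinely nonlinear and your Fredholm-plus-openness argument does not apply as stated. Second, your invertibility step leans on ``the uniqueness theorem for Problem \ref{ipc} (established in \cite{X})'': uniqueness for Problem \ref{ipc} is not what \cite{X} proves (it follows from Weyl-function uniqueness, e.g. \cite{BS}), and, more importantly, in the non-self-adjoint setting the implication ``uniqueness of the inverse problem $\Rightarrow$ the homogeneous main equation has only the trivial solution'' is not automatic; in the approach the paper relies on, unique solvability of the main equation for the reference data is part of the cited machinery, not a consequence of an abstract uniqueness statement.
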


Note that the analog of Theorem~\ref{thca} for the case of the Dirichlet boundary condition $y(0) = 0$ was proved in \cite{BP2}.

\begin{proof}
Let us prove Theorem \ref{thca} by showing several  auxiliary propositions.  Let $z=\lambda^2$. Define the functions
\begin{equation}\label{ca2.1}
 \eta_1(z):=\cos (\lambda a)+\frac{\omega\sin(\lambda a)}{\lambda}-\int_{0}^aK_1(t)\frac{\sin (\lambda t)}{\lambda}dt,
\end{equation}
\begin{equation}\label{ca2.2}
  \eta_2(z):=-\lambda\sin (\lambda a)+{\omega\cos(\lambda a)}+\int_{0}^aK_2(t)\cos (\lambda t)dt.
\end{equation}
 By the standard method related to the Rouch\'{e}'s theorem, one can easily obtain the asymptotics of the zeros of the function $\eta_2(z)$.
\begin{proposition}
 Let $K_2(t)$ be  an arbitrary complex-valued function in $L^2{(0,a)}.$ Then the zeros $\{z_n\}_{n\ge0}$ with $|z_{n+1}|\ge |z_n|$ of the function $\eta(z)$
have the asymptotics
\begin{equation}\label{ca1}
\rho_n:=  \sqrt{z_n}=\frac{n\pi}{a}+O\left(\frac{1}{n}\right).
\end{equation}
\end{proposition}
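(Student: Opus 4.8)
The plan is to treat $\eta_2(z)$ as a perturbation of its leading term and to apply Rouch\'e's theorem in the $\lambda$-plane, exploiting the fact that $\eta_2(\lambda^2)$ is an \emph{even} entire function of $\lambda$, so that its zeros come in pairs $\pm\lambda$, each pair corresponding to a single value $z=\lambda^2$. Writing $\eta_2(\lambda^2)=-\lambda\sin(\lambda a)+g(\lambda)$ with $g(\lambda):=\omega\cos(\lambda a)+\int_0^aK_2(t)\cos(\lambda t)\,dt$, I would first record the two elementary estimates on which everything rests: the standard lower bound $|\sin(\lambda a)|\ge C_\delta e^{a|\operatorname{Im}\lambda|}$, valid whenever $\lambda$ stays at distance at least $\delta$ from every node $k\pi/a$, $k\in\mathbb Z$; and the upper bound $|g(\lambda)|\le(|\omega|+o(1))e^{a|\operatorname{Im}\lambda|}$ as $|\lambda|\to\infty$, where the $o(1)$ comes from a Riemann--Lebesgue type estimate for $\int_0^aK_2(t)\cos(\lambda t)\,dt$ valid for $K_2\in L^2(0,a)$. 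The crucial observation is that the leading term carries an extra factor $|\lambda|$: on contours bounded away from the nodes one has $|\lambda\sin(\lambda a)|\ge C_\delta|\lambda|e^{a|\operatorname{Im}\lambda|}$, which dominates $|g(\lambda)|$ once $|\lambda|$ is large.

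Next I would run two Rouch\'e arguments. On the circles $\Gamma_N=\{|\lambda|=(N+\tfrac12)\pi/a\}$, which stay at a fixed distance from all nodes, the estimates above give $|g(\lambda)|<|\lambda\sin(\lambda a)|$ for $N$ large, so $\eta_2$ and $-\lambda\sin(\lambda a)$ have the same number of $\lambda$-zeros inside $\Gamma_N$; counting the double zero of $\lambda\sin(\lambda a)$ at the origin together with the simple zeros at $\pm k\pi/a$ fixes the total count, and hence, after passing to $z=\lambda^2$, the correct indexing of $\{z_n\}_{n\ge0}$. Then, on each small circle $\gamma_n=\{|\lambda-n\pi/a|=\delta\}$ with $n$ large, the same comparison (now with $e^{a|\operatorname{Im}\lambda|}\approx1$, but with $|\lambda\sin(\lambda a)|\ge c\,n\delta$ still beating the bounded perturbation $|g(\lambda)|=O(1)$) shows that $\eta_2$ has exactly one simple $\lambda$-zero $\tilde\lambda_n$ inside $\gamma_n$. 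This already yields $\tilde\lambda_n=n\pi/a+O(\delta)$.

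To upgrade $O(\delta)$ to $O(1/n)$ I would substitute $\tilde\lambda_n=n\pi/a+\varepsilon_n$ into $\eta_2(\tilde\lambda_n^2)=0$. Using $\sin(\tilde\lambda_n a)=(-1)^n\sin(\varepsilon_n a)$ and $\cos(\tilde\lambda_n a)=(-1)^n\cos(\varepsilon_n a)$, together with the Riemann--Lebesgue decay of the integral term along the (nearly real) sequence $\tilde\lambda_n\to\infty$, the defining equation reduces to $n\pi\,\varepsilon_n(1+o(1))=\omega(1+o(1))+o(1)$, whence $\varepsilon_n=O(1/n)$ (in fact $\varepsilon_n=\omega/(n\pi)+o(1/n)$). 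Choosing the branch of the square root with $\operatorname{Re}\rho_n>0$ for large $n$ then gives $\rho_n=\sqrt{z_n}=\tilde\lambda_n=n\pi/a+O(1/n)$, as claimed.

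The main obstacle I anticipate is not the localization itself but the bookkeeping that guarantees correct indexing: I must ensure the large-contour count on $\Gamma_N$ matches exactly the number of localized zeros, so that when passing from the $\lambda$-plane to the $z$-plane no zero of $\eta_2$ is missed and none is counted twice, and I must handle the finitely many zeros of small index, where the gain factor $|\lambda|$ is not yet large, separately. The analytic heart is the uniform lower bound for $|\sin(\lambda a)|$ away from the nodes and the Riemann--Lebesgue estimate for the $L^2$ (rather than continuous) function $K_2$; both are classical, and once they are in place the two Rouch\'e steps and the final substitution are routine.
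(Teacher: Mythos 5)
Your proposal is correct and follows exactly the route the paper intends: the paper gives no detailed proof of this proposition, merely invoking ``the standard method related to Rouch\'e's theorem,'' and your two-stage Rouch\'e argument (global zero-counting on the circles $\Gamma_N$ in the $\lambda$-plane, then local separation near the nodes $n\pi/a$, using evenness in $\lambda$ to pass to the $z$-plane) followed by the substitution $\lambda = n\pi/a+\varepsilon_n$ to sharpen $O(\delta)$ to $O(1/n)$ is precisely that standard method, carried out in full. The two estimates you rest on --- the classical lower bound for $|\sin(\lambda a)|$ away from the nodes and the $o\bigl(e^{a|\operatorname{Im}\lambda|}\bigr)$ bound for $\int_0^a K_2(t)\cos(\lambda t)\,dt$ with $K_2\in L^2(0,a)$ --- are standard, so the argument is complete.
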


In view of the asymptotic formula (\ref{ca1}), we can find the smallest integer $n_1 \ge1$ such that the zeros $\{z_n\}_{n\ge n_1}$ are simple and $|z_{n_1}|>|z_{n_1-1}|$. Consider the disk $ \Gamma_0=\{z:|z|\le {(|z_{n_1}|+|z_{n_1-1}|)}/{2}\}$. Obviously, the zeros $\{z_n\}_{n=0}^{n_1-1}\subset {\rm int} \, \Gamma_0$, and the zeros $\{z_n\}_{n\ge n_1}$ lie strictly outside $\Gamma_0$.

Denote by $k_n$ the multiplicity of the value $z_n$  in the sequence $\{z_n\}_{n\ge0}$, and assume that multiple $z_n$'s are neighboring: $z_n=z_{n+1}=\cdot\cdot\cdot=z_{n+k_n-1}$.  Define $I_0:=\{n\ge1,z_n\ne z_{n-1}\}\cup\{0\}.$ Introduce the Weyl function $M(z)$ and the sequence $\{M_n\}_{n\ge0}$ as follows:
\begin{equation*}
 M(z):=\frac{\eta_1(z)}{\eta_2(z)},\quad M_n:=\mathop{\rm Res}\limits_{z=z_n}(z-z_n)^{v}M(z),\quad n\in I,\quad v=0,1,...,k_n-1.
\end{equation*}

In the following discussion, we agree that, if a certain symbol $\gamma$ denotes an object constructed by $\{K_1, K_2, \omega\}$, then the symbol $\tilde{\gamma}$ with tilde denotes the analogous object constructed by $\{\tilde{K}_1, \tilde{K}_2, \omega\} .$

\begin{lemma} \label{leap}
Let $K_1,K_2$ be fixed complex-valued functions in $L^{2}(0, a),$ and let $\omega \in \mathbb{C} .$ Then, there exists $\varepsilon>0$ (depending on $K_1, K_2, \omega)$ such that, for any $\tilde{K}_1, \tilde{K}_2 \in L^{2}(0, \pi)$ satisfying \eqref{cau}, the zeros $\left\{\tilde{z}_{n}\right\}_{n=0}^{n_{1}-1}$ of $\tilde{\eta}_2(z)$ lie strictly inside $\Gamma_{0}$ and

\begin{equation}\label{cau2}
\max _{z\in \partial\Gamma_{0}}|M(z)-\tilde{M}(z)| \leq C \Xi.
\end{equation}
For $n \geq n_{1},$ we have $\tilde{k}_{n}=1$ and
\begin{equation}\label{cau3}
\left(\sum_{n=n_{1}}^{\infty}\left(n \xi_{n}\right)^{2}\right)^{1 / 2} \leq C{\Xi}
\end{equation}
where $\xi_{n}:=\left|\rho_{n}-\tilde{\rho}_{n}\right|+ \left|M_{n}-\tilde{M}_{n}\right| .$ Here the positive constant $C$ in \eqref{cau2} and \eqref{cau3} depends only on $K_1$, $K_2$, and $\omega$.
\end{lemma}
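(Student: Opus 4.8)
My plan is to prove the two halves of the statement — the behaviour inside the fixed disk $\Gamma_0$ and the behaviour of the tail $\{z_n\}_{n\ge n_1}$ — by separate arguments, exploiting throughout that $\eta_1,\eta_2$ depend \emph{linearly} on $K_1,K_2$. Thus the perturbations
$\tilde\eta_2-\eta_2=\int_0^a(\tilde K_2-K_2)(t)\cos(\lambda t)\,dt$ and $\tilde\eta_1-\eta_1=-\int_0^a(\tilde K_1-K_1)(t)\frac{\sin(\lambda t)}{\lambda}\,dt$ are entire in $z=\lambda^2$ and are controlled directly by $\tilde K_j-K_j$. The argument rests on two engines: a uniform Rouch\'e comparison of the zero sets of $\eta_2$ and $\tilde\eta_2$, and the Bessel property of the systems $\{\cos\rho_n t\}_{n\ge0}$ and $\{\sin\rho_n t\}_{n\ge0}$ in $L^2(0,a)$ — a consequence of $\rho_n=\frac{n\pi}{a}+O(1/n)$ and the exponential Riesz basis from the Introduction — which turns the values of such integrals at the points $\rho_n$ into $\ell^2$–sequences bounded by $C\Xi$.

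For the region $\Gamma_0$ I would first note that, since the zeros of $\eta_2$ lie strictly inside or strictly outside $\Gamma_0$, the quantity $m:=\min_{z\in\partial\Gamma_0}|\eta_2(z)|$ is positive. On the compact set $\partial\Gamma_0$, the Cauchy–Schwarz inequality gives $|\tilde\eta_2-\eta_2|\le C\|\tilde K_2-K_2\|_{L^2}\le C\Xi$ and likewise $|\tilde\eta_1-\eta_1|\le C\Xi$, while $\eta_1,\eta_2$ are bounded there. Choosing $\varepsilon$ so small that $C\varepsilon<m$, Rouch\'e's theorem shows that $\tilde\eta_2$ has exactly $n_1$ zeros in $\mathrm{int}\,\Gamma_0$, none on $\partial\Gamma_0$ since $|\tilde\eta_2|\ge m-C\Xi>0$ there; these I label $\{\tilde z_n\}_{n=0}^{n_1-1}$. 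For \eqref{cau2} I write
\[
M-\tilde M=\frac{\eta_1\,(\tilde\eta_2-\eta_2)-\eta_2\,(\tilde\eta_1-\eta_1)}{\eta_2\,\tilde\eta_2},
\]
bound the denominator below by $m(m-C\Xi)\ge m^2/2$ and the numerator above by $C\Xi$, and obtain \eqref{cau2}.

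For the tail I would draw disjoint circles $\gamma_n$ of a fixed small radius about each simple, separated zero $z_n$, $n\ge n_1$. A direct estimate of the leading term $-\lambda\sin(\lambda a)$ gives a lower bound $|\eta_2|\ge c_*>0$ on $\bigcup_n\gamma_n$ that is uniform in $n$ (growing like $n$ for large $n$), while $|\tilde\eta_2-\eta_2|\le C\Xi$ uniformly there, the relevant $\rho$ having bounded imaginary part. Hence for $\varepsilon<c_*/C$ Rouch\'e applies on every $\gamma_n$ and yields exactly one simple zero $\tilde z_n$, so $\tilde k_n=1$. To make this quantitative I expand $0=\tilde\eta_2(\tilde z_n)=\eta_2'(z_n)(\tilde z_n-z_n)+(\tilde\eta_2-\eta_2)(z_n)+O(|\tilde z_n-z_n|^2)$; using $|\eta_2'(z_n)|\asymp1$ (its $z$–derivative at $z_n$ is $-\tfrac{a}{2}\cos\rho_n a+O(1/n)$) I get $|z_n-\tilde z_n|\le C|a_n|$ with $a_n:=\int_0^a(\tilde K_2-K_2)\cos\rho_n t\,dt$, and $\{a_n\}\in\ell^2$, $\|a_n\|_{\ell^2}\le C\Xi$, by the Bessel property. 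Since $\rho_n-\tilde\rho_n=(z_n-\tilde z_n)/(\rho_n+\tilde\rho_n)$ and $\rho_n\asymp n$, the factor $n$ in \eqref{cau3} is absorbed: $n|\rho_n-\tilde\rho_n|\le C|a_n|$.

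Finally, for the residues I use $M_n=\eta_1(z_n)/\eta_2'(z_n)$ (simple zeros, $n\ge n_1$) and split
\[
M_n-\tilde M_n=\left(\frac{\eta_1}{\eta_2'}-\frac{\tilde\eta_1}{\tilde\eta_2'}\right)(z_n)+\left(\frac{\tilde\eta_1}{\tilde\eta_2'}(z_n)-\frac{\tilde\eta_1}{\tilde\eta_2'}(\tilde z_n)\right).
\]
The first term is governed by $(\tilde\eta_1-\eta_1)(z_n)=-b_n/\rho_n$ and $(\tilde\eta_2'-\eta_2')(z_n)=-c_n/(2\rho_n)$, where $b_n=\int_0^a(\tilde K_1-K_1)\sin\rho_n t\,dt$ and $c_n=\int_0^a t\,(\tilde K_2-K_2)\sin\rho_n t\,dt$ are again $\ell^2$ and $\le C\Xi$; dividing by the order–one quantities $\eta_1(z_n),\eta_2'(z_n)$ gives a bound $C(|b_n|+|c_n|)/n$. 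The second term is at most $C|z_n-\tilde z_n|\cdot\big|\tfrac{d}{dz}(\tilde\eta_1/\tilde\eta_2')\big|$, and since this derivative is $O(1/n)$ it is $\le C|a_n|/n$. Hence $n|M_n-\tilde M_n|\le C(|a_n|+|b_n|+|c_n|)$, which is $\ell^2$–summable and bounded by $C\Xi$; combined with the estimate on $\rho_n-\tilde\rho_n$ this yields \eqref{cau3}. I expect the last weighted $\ell^2$ estimate to be the main obstacle: one must extract exactly one power of $n$ from the change of variables $z=\lambda^2$ while verifying that the competing residue difference decays like $1/n$, and one must upgrade the pointwise smallness of the perturbations at the individual points $\rho_n$ into a single $\ell^2$ bound uniform in $n$, which is precisely where the Bessel property of $\{\cos\rho_n t\}$ and $\{\sin\rho_n t\}$ enters.
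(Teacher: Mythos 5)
Your proposal is correct and follows essentially the same route as the paper: Rouch\'e's theorem on $\partial\Gamma_0$ together with the quotient bound for $M-\tilde M$, Rouch\'e again on small disks around the separated simple zeros, a linearization with the lower bound $|\eta_2'(z_n)|\ge c>0$ to get $|z_n-\tilde z_n|\le C|a_n|$, the residue formula $M_n=\eta_1(z_n)/\eta_2'(z_n)$ with the same algebraic splitting, and an $\ell^2$ (Bessel-type) summation of the resulting Fourier-type coefficients. The only cosmetic differences are that you work in the $z$-plane and evaluate the perturbations at the unperturbed points $\rho_n$ (invoking the Bessel property of $\{\cos\rho_n t\}$, $\{\sin\rho_n t\}$ directly), whereas the paper works in the $\lambda$-plane, evaluates at $\tilde\rho_n$, and reduces to the orthogonal system $\{\cos(n\pi t/a)\}$ via the estimate $\cos(\tilde\rho_n t)-\cos(n\pi t/a)=O(1/n)$ — which is exactly how one proves the Bessel property you use.
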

\begin{proof}
In the proof, we denote by $C_i$ ($i=1,...,20$) positive constants, which depend only on $K_1$, $K_2$, and $\omega$.
From the conditions of the lemma, we see that
\begin{equation}\label{cau4}
|\eta_2(z)|\ge C_1,\quad |\eta_2(z)-\tilde{\eta}_2(z)|\le C_2\Xi,\quad z\in \partial\Gamma_0.
\end{equation}
 It follows that
 \begin{equation}\label{cau5}
 |\eta_2(z)-\tilde{\eta}_2(z)|/|\eta_2(z)|<1,\quad z\in\partial\Gamma_0,
\end{equation}
for sufficiently small $\varepsilon>0.$ Thus, we have from the Rouch\'{e}'s theorem that the function $\tilde{\eta}_2(z)$ has the same number of zeros as $\eta_2(z)$ inside $\Gamma_0$.  According to our notations, these zeros of $\tilde{\eta}_2(z)$ are $\left\{\tilde{z}_{n}\right\}_{n=0}^{n_{1}-1}$. Again, using \eqref{cau5}, we have
\begin{equation}\label{cau6}
  |\tilde{\eta}_2(z)|\ge |\eta_2(z)|-|\eta_2(z)-\tilde{\eta}_2(z)|\ge C_3,\quad z\in \partial\Gamma_0
\end{equation}
for sufficiently small $\varepsilon>0.$
Using the definition of $M(z)$ together with (\ref{cau4}) and (\ref{cau6}), and noting that $\eta_i(z)$ ($i=1,2$) are bounded on $\Gamma_0$, we obtain
\begin{equation*}
 | M(z)-\tilde{M}(z)|\le\frac{|\eta_1(z)-\tilde{\eta}_1(z)||\eta_2(z)|+|\eta_2(z)-\tilde{\eta}_2(z)||{\eta}_1(z)|}{|\eta_2(z)\tilde{\eta}_2(z)|}\le C\Xi,\quad z\in\partial\Gamma_0,
\end{equation*}
which implies (\ref{cau2}).

Now, let us prove \eqref{cau3}. We shall first prove the inequality for the part of $|\rho_n-\tilde{\rho}_n|.$ For $n\ge n_1,$  consider the disks $\gamma_{n,\delta}:=\{\lambda:|\lambda-\rho_n|\le\delta\}$, where $\delta>0$ is fixed and so small that $\delta\le \frac{|\rho_n-\rho_{n+1}|}{2}$ for all $n\ge n_1.$ Then the function $\eta_2(\lambda^2)$ has exactly one zero $\rho_n\in  {\rm int}\, \gamma_{n,\delta}$ in the $\lambda$-plane for every $n\ge n_1.$  It follows from \eqref{ca2.2} that
\begin{equation}\label{cau7}
|\eta_2(\lambda^2)|\le n C_4,\quad  \lambda\in \gamma_{n,\delta},\quad|\dot{\eta}_2(\rho_n^2)|\ge nC_5,\quad n\ge n_1
\end{equation}
where $\dot{\eta}_2(\lambda^2):=\frac{d\eta_2(\lambda^2)}{d\lambda}$.  For $\lambda\in {\rm int }\, \gamma_{n,\delta}$, we have the Taylor formula
\begin{equation}\label{cau9}
  \eta_{2}(\lambda^2)=\eta_{2}\left(\rho_n^2\right)+\dot{\eta}_2(\rho_{n}^2)\left(\lambda-\rho_{n}\right)+\frac{\left(\lambda-\rho_{n}\right)^{2}}{2 \pi {i}} \int_{\partial\gamma_{n, \delta}} \frac{\eta_{2}(\rho^2) \mathrm{d} \rho}{\left(\rho-\rho_{n}\right)^{2}(\rho-\lambda)}.
\end{equation}
Using (\ref{cau7}) and (\ref{cau9}), we obtain
\begin{equation}\label{cau10}
 |  \eta_{2}(\lambda^2)|\ge nC_5|\lambda-\rho_{n}|-\frac{nC_4}{\delta^2(\delta-\delta_1)}|\lambda-\rho_{n}|^2\ge nC_6 |\lambda-\rho_{n}|,\quad \lambda\in \gamma_{n,\delta_1},
\end{equation}
where $\delta_1\in (0,\delta)$ is  sufficiently small and fixed.

For sufficiently small $\varepsilon>0$, we have
\begin{equation}\label{cau8}
 |\eta_2(\lambda^2)-\tilde{\eta}_2(\lambda^2)|\le C_7\Xi,\quad \lambda\in \partial\gamma_{n,\delta_1},\quad n\ge n_1.
\end{equation}
Using (\ref{cau8}), and noting $|\eta_2(\lambda^2)|\ge C_8 $ for $\lambda\in \partial\gamma_{n,\delta_1}$ for $ n\ge n_1$, we obtain that for sufficiently small $\varepsilon>0$ there holds
\begin{equation*}
   |\eta_2(\lambda^2)-\tilde{\eta}_2(\lambda^2)|<|\eta_2(\lambda^2)|,\quad \lambda\in \partial\gamma_{n,\delta_1},\quad n\ge n_1.
\end{equation*}
It follows from  the Rouch\'{e}'s theorem that the function $\tilde{\eta}_2(\lambda^2)$ has exactly one zero  $\tilde{\rho}_n\in{\rm int}\gamma_{n,\delta_1}$ for each $n\ge n_1$. Using \eqref{cau10} and (\ref{ca2.2}), we get
\begin{equation}\label{cau11}
 |\tilde{\rho}_n-\rho_{n}|\le \frac{1}{nC_6}|  \eta_{2}(\tilde{\rho}_n^2)-\tilde{\eta}_{2}(\tilde{\rho}_n^2)|\le \frac{C_9}{n}\left|\int_0^a\hat{K}_2(t)\cos (\tilde{\rho}_nt)dt\right|,
\end{equation}
where $\hat{K}_2:=\tilde{K}_2-K_2$. Using (\ref{cau}) and the asymptotic formula \eqref{ca1} of $\tilde{\rho}_n$, we have
\begin{equation}\label{cau12}
  \begin{aligned}
\left|\int_{0}^{a} \hat{K}_2(t) \cos \left(\tilde{\rho}_{n} t\right) \mathrm{d} t\right| & \leq\left|\int_{0}^{a} \hat{K}_2(t) \cos\frac{n\pi t}{a} \mathrm{d} t\right|+\left|\int_{0}^{a} \hat{K}_2(t)\!\!\left(\cos \left(\tilde{\rho}_{n} t\right)\!-\!\cos\frac{n\pi t}{a}\right)\! \mathrm{d} t\right| \\
& \leq\left|\hat{K}_{2,n}\right|+\frac{C_{10} \Xi}{n}, \quad n \geq n_{1}, \quad \hat{K}_{2,n}:=\int_{0}^{a} \hat{K}_2(t) \cos \frac{n\pi t}{a} \mathrm{d} t.
\end{aligned}
\end{equation}
It follows from \eqref{cau11} and \eqref{cau12} that
\begin{equation*}
 n|\tilde{\rho}_n-\rho_{n}|\le C_9 \left|\hat{K}_{2,n}\right|+\frac{C_{11} \Xi}{n}.
\end{equation*}
Using the Bessel inequality for the Fourier coefficients $\{K_{2,n}\}_{n\ge n_1}$ together with (\ref{cau}), we have
\begin{equation}\label{cau13}
\sqrt{\sum_{n=n_1}^\infty n^2|\tilde{\rho}_n-\rho_{n}|^2}\le C_{12}\Xi.
\end{equation}

Let us prove the inequality (\ref{cau3}) for the part of $|M_n-\tilde{M}_n|$. Note that $\{z_n\}_{n\ge n_1}$ are simple zeros of $\eta_2(z)$. Thus we have
\begin{equation}\label{cau14}
 M_n:=\mathop{\rm Res}\limits_{z=z_n}M(z)=\frac{\eta_1(z_n)}{{\eta}_2'(z_n)},\quad n\ge n_1.
\end{equation}
For the sufficiently small $\varepsilon>0$,  the analogous relation is valid for $\tilde{M}_n$ for $n\ge n_1$. Thus we have
\begin{equation}\label{cau15}
 \tilde{M}_{n}-M_{n}=\frac{\left(\tilde{\eta}_{1}(\tilde z_n)-\eta_{1}(z_n)\right) {\eta}_{2}'(z_n)+\eta_{1}(z_n)\left({\eta}_{2}'(z_n)-{\tilde{\eta}}_{2}'(\tilde z_n)\right)}{{\eta}_{2}'(z_n) {\tilde{\eta}}_{2}'(\tilde z_n)}, \quad n \geq n_{1}
\end{equation}
From (\ref{ca2.1}) and (\ref{ca2.2}), we know that
\begin{equation}\label{cau16}
\begin{split}
 &|{\eta}_1(z_n)|\le C_{13},\quad |{{\eta}}_2'(z_n)|\ge C_{14},\quad  |\eta_1(z_n)-\tilde{\eta}_1(\tilde z_n)|\le C_{15}\left(\frac{|\hat{K}_{1,n}|}{n}+\frac{\Xi}{n^2}\right)\\
 & |{\tilde{\eta}}_2'(\tilde z_n)|\ge C_{16},\quad |{\eta}_2'(z_n)-{\tilde{\eta}}_2'(z_n)|\le C_{17}\left(\frac{|\check{K}_{2,n}|}{n}+\frac{\Xi}{n^2}\right),\quad n\ge n_1,
\end{split}
\end{equation}
where
\begin{equation*}
  \hat{K}_{1,n}=\int_0^a [\tilde{K}_1(t)-K_1(t)]\sin \frac{n\pi t}{a}dt,\quad   \check{{K}}_{2,n}=\int_0^a t[K_2(t)-\tilde{K}_2(t)]\sin \frac{n\pi t}{a}dt.
\end{equation*}
Using (\ref{cau15}), (\ref{cau16}) and the second inequality in (\ref{cau7}), we have
\begin{equation}\label{cau17}
  |\tilde{M}_n-M_n|\le \frac{C_{18}(|\hat{K}_{1,n}|+|\check{K}_{2,n}|)}{n}+\frac{C_{19}\Xi}{n^2}.
\end{equation}
Similarly to (\ref{cau13}), we get
\begin{equation}\label{cau18}
 \sqrt{\sum_{n=n_1}^\infty n^2 |\tilde{M}_n-M_n|^2}<C_{20}\Xi.
\end{equation}
Together with (\ref{cau13}) and (\ref{cau18}), we arrive at (\ref{cau3}). The proof of Lemma \ref{leap} is complete.
\end{proof}

In \cite{BP1,BS}, the following inverse problem is considered.

\begin{ip}\label{ip2}
Given the data $\left\{{z}_{n}, {M}_{n}\right\}_{n=0}^{\infty}$, find $q$ and $h$.
\end{ip}
In \cite{BP1}, Bondarenko proved the local solvability and stability for the above Inverse Problem \ref{ip2}.

\begin{proposition}\label{proa}
 Let $q \in L^{2}(0, a)$ and $h\in \mathbb{C}$ be fixed. Then, there exists $\varepsilon>0$ (depending on $q$ and $h$) such that, for any complex numbers $\left\{\tilde{z}_{n}, \tilde{M}_{n}\right\}_{n=0}^{\infty}$ satisfying the estimate
$$
\Omega:=\max \left\{\max _{\lambda \in \partial\Gamma_{0}}|M(\lambda)-\tilde{M}(\lambda)|,\left(\sum_{n=n_{1}}^{\infty}\left(n \xi_{n}\right)^{2}\right)^{1 / 2}\right\} \leq \varepsilon
$$
there exist the unique complex-valued function $\tilde{q} \in L^{2}(0, a)$ and $\tilde{h}\in\mathbb{C}$ being the solution of Inverse Problem \ref{ip2} for $\left\{\tilde{z}_{n}, \tilde{M}_{n}\right\}_{n=0}^{\infty} .$ Moreover,
\begin{equation*}
  \|\tilde{q}-q\|_{L^2(0,a)}\le C \Omega,\quad |\tilde{h}-h|\le C \Omega,
\end{equation*}
where the constant $C$ depends only on $q $ and $h.$
\end{proposition}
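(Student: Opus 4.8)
The plan is to prove Proposition~\ref{proa} by the \emph{method of spectral mappings}, reducing the recovery of $\tilde q$ and $\tilde h$ to the solution of a linear \emph{main equation} in a Banach space of sequences whose norm is governed by $\Omega$, and then inverting that equation by a perturbation argument anchored at the reference data. Throughout, the symbols $z_n$, $\rho_n=\sqrt{z_n}$, $M_n$, the multiplicities $k_n$, the threshold $n_1$, the contour $\partial\Gamma_0$, and $\xi_n$ keep the meaning fixed above.

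First I would work with the solution $\varphi(x,\lambda)$ of~\eqref{1}, \eqref{2} from~\eqref{z1}, which is even in $\lambda$ and hence a function of $z=\lambda^2$, written $\varphi(x,z)$, and with the Weyl function $M(z)=\eta_1(z)/\eta_2(z)$, whose poles are the $z_n$ and whose principal-part coefficients are the $M_n$; these $\{z_n,M_n\}$ are exactly the given data. Writing $\hat M=M-\tilde M$ and denoting by $\tilde\varphi$ the (sought) solution attached to $\tilde q,\tilde h$, the residue theorem applied to products of solutions of the two problems against $\hat M$ produces a representation of the form
\begin{equation*}
\tilde\varphi(x,z)=\varphi(x,z)+\frac{1}{2\pi i}\oint_{\gamma}\hat M(\zeta)\,\mathcal D(x,z,\zeta)\,d\zeta,
\end{equation*}
where $\gamma$ encircles all the $z_n$ and $\mathcal D$ is a bilinear expression in $\varphi(x,\cdot)$ and $\tilde\varphi(x,\cdot)$. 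I would then deform $\gamma$ into $\partial\Gamma_0$ together with small circles around the simple zeros $z_n$ with $n\ge n_1$. Collapsing the residues on those circles turns them into a discrete system in the differences $\rho_n-\tilde\rho_n$ and $M_n-\tilde M_n$, while the remaining contribution over $\partial\Gamma_0$ is controlled by $\max_{z\in\partial\Gamma_0}|M(z)-\tilde M(z)|$. Re-indexing the sampled functions $\varphi(x,\rho_n)$ as a vector $\psi$ in a weighted sequence space $B$ — the weight chosen to match the $\bigl(\sum_{n\ge n_1}(n\xi_n)^2\bigr)^{1/2}$ norm, augmented by the sup-norm on $\partial\Gamma_0$ — recasts the representation as the main equation $(I+\tilde{\mathcal R})\tilde\psi=\psi$ in $B$, the entries of $\tilde{\mathcal R}$ being assembled from $\hat M$ on $\partial\Gamma_0$ and from $\{\rho_n-\tilde\rho_n,\,M_n-\tilde M_n\}_{n\ge n_1}$.

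Next I would invert $I+\tilde{\mathcal R}$ by perturbation. Directly from the definition of $\Omega$ one reads off the operator bound $\|\tilde{\mathcal R}\|_{B\to B}\le C\Omega$ (in particular $\tilde{\mathcal R}$ vanishes when the data coincide, where $\tilde\psi=\psi$). Hence for $\Omega\le\varepsilon$ with $\varepsilon$ small enough, a Neumann series shows that $I+\tilde{\mathcal R}$ is invertible on $B$ with $\|(I+\tilde{\mathcal R})^{-1}\|_{B\to B}\le C$, giving a unique solution $\tilde\psi\in B$. The explicit reconstruction formulas of the method of spectral mappings then express the solution of Inverse Problem~\ref{ip2} through $\tilde\psi$ — schematically $\tilde q=q-2\varepsilon_0'$ with $\varepsilon_0$ built from $\tilde\psi$, and $\tilde h$ from the corresponding limiting value — yielding a unique pair $(\tilde q,\tilde h)$ with $\tilde q\in L^2(0,a)$. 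Finally, from $\tilde\psi-\psi=-\tilde{\mathcal R}(I+\tilde{\mathcal R})^{-1}\psi$ one gets $\|\tilde\psi-\psi\|_B\le C\Omega$, which, through the continuity of the reconstruction formulas in the $B$-norm, transfers to $\|\tilde q-q\|_{L^2(0,a)}\le C\Omega$ and $|\tilde h-h|\le C\Omega$.

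\textbf{Main obstacle.} The technical heart of the argument is the handling of multiple eigenvalues and their splitting. Inside $\Gamma_0$ the zeros $z_n$ may be multiple and, under a small perturbation, can split into several nearby $\tilde z_n$ of smaller multiplicity, so the spectral data there cannot be indexed discretely by individual residues. The device that resolves this is precisely the hybrid structure of $\Omega$: the finitely many small (possibly multiple) zeros are encoded not through individual data but through the values of the Weyl function $M$ on the \emph{fixed} contour $\partial\Gamma_0$, a quantity insensitive to how those zeros split, while only the simple tail $n\ge n_1$ enters through the sequences $\{\rho_n,M_n\}$. Verifying that $\tilde{\mathcal R}$ is uniformly controlled by $\Omega$ across both the contour part and the discrete tail, and that the reconstructed $\tilde q$ genuinely lies in $L^2(0,a)$ and depends continuously on the data, is where the main work lies.
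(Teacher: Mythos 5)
You should know at the outset that the paper contains \emph{no proof} of Proposition~\ref{proa}: the authors state it as a known result, imported verbatim from reference~\cite{BP1} (``In \cite{BP1}, Bondarenko proved the local solvability and stability for the above Inverse Problem~\ref{ip2}''), and they use it as a black box, combined with Lemma~\ref{leap}, to finish the proof of Theorem~\ref{thca}. So there is no in-paper argument to measure your proposal against; the honest comparison is with the proof in \cite{BP1}. On that comparison your outline is essentially faithful: the argument there is indeed by the method of spectral mappings, with exactly the hybrid data structure you identify --- the finitely many small zeros (possibly multiple, and allowed to split under perturbation) enter only through the values of the Weyl function on the fixed contour $\partial\Gamma_0$, while the simple tail $n\ge n_1$ enters through the discrete quantities $\rho_n$, $M_n$ --- leading to a linear main equation in a Banach space that combines a sup-norm on the contour with a weighted $\ell^2$ tail norm, inverted by a Neumann series when $\Omega$ is small, followed by reconstruction formulas of the type $\tilde q = q - 2\varepsilon_0'$.

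Two caveats, both located at the spots you yourself flag as ``where the main work lies,'' which is why your text is a correct program rather than a complete proof. First, solvability is not finished once the main equation is solved: one must show that the pair $(\tilde q,\tilde h)$ built from $\tilde\psi$ actually has $\{\tilde z_n,\tilde M_n\}_{n\ge 0}$ as its spectral data, i.e.\ one must construct the characteristic functions and the Weyl function of the new problem from $\tilde\psi$ and verify that their poles and generalized residues are the prescribed ones; this does not follow from the invertibility of $I+\tilde{\mathcal R}$ alone, and in the non-self-adjoint setting it is a substantial step. Second, the estimate $\|\tilde q-q\|_{L^2(0,a)}\le C\Omega$ requires control of the \emph{derivative} $\varepsilon_0'$ in $L^2$; ``continuity of the reconstruction formulas in the $B$-norm'' is not enough, because differentiation brings in an extra factor of $n$ (respectively of $\lambda$ on the contour), and it is precisely the weight $n\xi_n$ inside $\Omega$ that is needed to absorb it --- this bookkeeping must be carried out, and it is carried out in \cite{BP1}. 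With those two blocks filled in, your route coincides with the one the paper relies on.
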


Using Lemma \ref{leap} and Proposition \ref{proa}, we finish the proof of Theorem \ref{thca}.
\end{proof}

\section{Proof of Theorem \ref{th4.1}}

In this section, we prove Theorem~\ref{th4.1} on the local solvability and stability theorem for the inverse Robin-Regge problem.

Note that the eigenvalues of the problem $ L(q,h,\alpha,\beta)$ coincide with the zeros of the \emph{characteristic function}
\begin{equation}\label{2.0}
  \Delta(\lambda)=\varphi'(a,\lambda)+(i\lambda \alpha+\beta)\varphi(a,\lambda).
\end{equation}
Denote by $m_k$ the multiplicity of the value $\lambda_k$  in the sequence $\{\lambda_n\}_{n\in \mathbb{Z}_j^-}$. In view of the asymptotics (\ref{2.7}),  there are at most finitely many multiple eigenvalues. Therefore, $m_n = 1$ for all  $|n|\ge n_0$ for some $n_0>0$.
Define the set
$$
\mathcal{S}_j:=\{n\in \mathbb{Z}_j^{-}\colon \lambda_n\ne\lambda_{k},\forall k\in \mathbb{Z}_j^{-} \colon k < n \},\quad j=0,1.
$$
Clearly, the sequence $\{\lambda_n\}_{n\in \mathcal{S}_j}$ consists of elements of $\{\lambda_n\}_{n\in \mathbb{Z}_j^-}$ being taken only once.

Without loss of generality, impose the following assumption.

\medskip

\textbf{Assumption} ($\mathcal N$): The multiple values  $\lambda_n$ in the sequence $\{ \lambda_n \}_{n \in \mathbb Z_j^-}$ are neighboring: $\lambda_n=\lambda_{n + 1}=\dots=\lambda_{n + m_n - 1}$ for all $n \in \mathcal S_j$.

\medskip

Introduce the functions
 \begin{equation}\label{s1}
 u_{n + \nu}(t):=(it)^\nu e^{i\lambda_nt},\quad n\in \mathcal{S}_j,\quad \nu=0,1,\dots,m_n-1.
\end{equation}

\begin{lemma}[See \cite{X}]\label{l2.2}
Suppose that the sequence $\{\lambda_n\}_{n\in \mathbb{Z}_j^-}$ $(j=0,1)$ satisfies the asymptotics (\ref{2.7}) and assumption ($\mathcal N$). Then the system $\{u_n(t)\}_{n\in \mathbb{Z}_j^-}$ is a Riesz basis in $L^2(-a,a)$.
\end{lemma}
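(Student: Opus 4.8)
The plan is to prove the Riesz basis property by a perturbation argument: I would compare the generalized exponential system $\{u_n\}_{n\in\mathbb{Z}_j^-}$ with an explicit reference system of pure exponentials, and treat the eigenvalue multiplicities as a finite-dimensional correction. Write the leading term of \eqref{2.7} as $\mu_n:=\frac{(|n|-\frac{j+1}{2})\pi}{a}\,\mathrm{sgn}\,n$ and the constant imaginary shift as $\tau:=\frac{i}{2a}\ln\left|\frac{\alpha+1}{1-\alpha}\right|$, so that \eqref{2.7} reads $\lambda_n=\mu_n+\tau+\frac{P+\gamma_{j,n}}{n}$ with $\{\gamma_{j,n}\}\in l_2$. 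First I would remove the constant imaginary shift: since $t\mapsto e^{i\tau t}$ is continuous and bounded away from $0$ on $[-a,a]$, multiplication by $e^{i\tau t}$ is a bounded and boundedly invertible operator on $L^2(-a,a)$, and it sends $(it)^\nu e^{i(\lambda_n-\tau)t}$ onto $u_{n+\nu}(t)$. Because the image of a Riesz basis under such an operator is again a Riesz basis, it suffices to treat the shifted frequencies $\lambda_n-\tau=\mu_n+\frac{P+\gamma_{j,n}}{n}$, whose only remaining deviation from the real reference frequencies $\mu_n$ is the $l_2$-sequence $\frac{P+\gamma_{j,n}}{n}$.

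Next I would establish that the reference system $\{e^{i\mu_n t}\}_{n\in\mathbb{Z}_j^-}$ is itself a Riesz basis in $L^2(-a,a)$. The frequencies $\mu_n$ are real and, for large $|n|$, uniformly separated and asymptotically equal to $\frac{n\pi}{a}$, i.e. to the frequencies of the orthogonal basis $\{e^{in\pi t/a}\}_{n\in\mathbb{Z}}$. A direct density count shows that the deletion of the single index $1$ (together with $0$, which is already absent from $\mathbb{Z}_1$ in the case $j=1$) lowers the counting function to the exact critical value $\tfrac{2a}{\pi}R+1$ in $[-R,R]$, so that $\{e^{i\mu_n t}\}_{n\in\mathbb{Z}_j^-}$ is complete and minimal rather than over- or under-complete. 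I would then confirm the Riesz basis property by a classical criterion for nonharmonic Fourier series (a Kadec/Avdonin ``$1/4$-in-the-mean'' estimate, or Levin's sine-type generating function), using that the half-integer shift present in $\mu_n$ for $j=0$ yields an entire generating function of sine type with uniformly separated zero set.

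The heart of the argument is then to transfer the Riesz basis property from $\{e^{i\mu_n t}\}$ to $\{u_n\}$ by quadratic closeness. From \eqref{2.7} one has $\sum_{n}|(\lambda_n-\tau)-\mu_n|^2=\sum_n|P+\gamma_{j,n}|^2/n^2<\infty$, so the perturbed frequencies are quadratically close to the reference ones, and away from a finite set of indices the individual perturbations are small and all eigenvalues are simple ($m_n=1$ for $|n|\ge n_0$). On this infinite tail the standard Bari-type stability theorem for exponential Riesz bases applies directly, since $\|e^{i(\lambda_n-\tau)t}-e^{i\mu_n t}\|_{L^2(-a,a)}^2\le C\,|(\lambda_n-\tau)-\mu_n|^2$.

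\textbf{The main obstacle} is the confluence of frequencies at the finitely many multiple eigenvalues, where the naive exponentials $e^{i\lambda_n t}$ coincide and must be replaced by the associated functions $(it)^\nu e^{i\lambda_n t}$ of \eqref{s1}. I would handle this as a finite-rank modification: by the asymptotics only the indices with $|n|<n_0$ can carry multiplicity, so the block $\{(it)^\nu e^{i\lambda_n t}\colon n\in\mathcal{S}_j,\ |n|<n_0,\ 0\le\nu\le m_n-1\}$ is finite-dimensional and occupies the same number of slots as the corresponding finite block of reference exponentials. It remains to verify that the full generalized system stays complete and minimal: the associated functions of a single confluent cluster are linearly independent, and one checks — via the generating function, whose zeros are exactly the $\lambda_n$ counted with multiplicity — that no nontrivial finite combination of the multiple block lies in the closed span of the simple tail. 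Granted completeness and minimality together with quadratic closeness on the tail, Bari's theorem gives that $\{u_n\}_{n\in\mathbb{Z}_j^-}$ is a Riesz basis in $L^2(-a,a)$, which completes the proof.
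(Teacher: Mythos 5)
The paper itself does not prove Lemma~\ref{l2.2}: it is imported verbatim from \cite{X} (hence the attribution ``See \cite{X}''), so there is no internal proof to compare against line by line. Your proposal reconstructs the standard argument behind such lemmas, and it is essentially the same route as in the cited source: split off the constant imaginary shift, compare with an explicit reference exponential system, use quadratic closeness plus a completeness check to invoke Bari-type stability, and treat the finitely many confluent frequencies (where the associated functions $(it)^\nu e^{i\lambda_n t}$ enter) as a finite-dimensional correction. The individual steps are sound: multiplication by $e^{i\tau t}$ is indeed a bounded, boundedly invertible multiplier, since $\tau$ is purely imaginary (for $\alpha>0$, $\alpha\neq 1$ one has $\ln\left|\frac{\alpha+1}{1-\alpha}\right|>0$), so the multiplier is a real exponential bounded above and below on $[-a,a]$; your identification of the reference system is correct in both cases (for $j=1$ the frequencies $\mu_n$, $n\in\mathbb{Z}_1^-$, fill exactly the lattice $\{k\pi/a\}_{k\in\mathbb{Z}}$, i.e.\ an orthogonal basis, while for $j=0$ the generating function $\lambda\cos(a\lambda)/(\lambda-\frac{\pi}{2a})$ is of sine type with simple separated zeros, so Levin--Golovin applies); and the tail estimate $\sum_n |(\lambda_n-\tau)-\mu_n|^2<\infty$ follows from \eqref{2.7}.

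Two remarks on the places where your write-up is thinnest, neither of which is a fatal gap. First, you correctly flag that quadratic closeness alone does not transfer the basis property across the multiple-eigenvalue block (a quadratically close system can be incomplete), and that completeness of the full generalized system must be verified via the generating function whose zeros are the $\lambda_n$ counted with multiplicity; this is the one step that genuinely requires work (Paley--Wiener growth estimates for $G/F$ where $G^{(\nu)}(\lambda_n)=0$, $0\le\nu\le m_n-1$), and your sketch names the right tool but does not carry it out. Second, you invoke ``completeness and minimality together with Bari's theorem''; in fact completeness alone suffices here: the operator $T$ sending the reference basis to your system has the form $I+K$ with $K$ Hilbert--Schmidt by quadratic closeness, so $T$ is Fredholm of index zero, completeness gives dense (hence, by closedness of the range, full) range, and index zero then forces injectivity, so $T$ is an isomorphism and minimality comes for free. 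With these points made precise, your argument is a correct, self-contained proof of the lemma, of the same type as the one in \cite{X} that the paper relies on.
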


Define the inner product in $L^2(-a,a)$ as
 $$
(g_1,g_2):=\int_{-a}^a \overline{g_1(t)}g_2(t)dt,\quad \forall g_1,g_2\in L^2(-a,a).
$$
Substituting (\ref{z1}), (\ref{2.1}) and (\ref{2.2}) into (\ref{2.0}), we have
 \begin{align}\label{x6}
 \Delta(\lambda)=f(\lambda)+\frac{1}{2}\int_{-a}^a[\overline{M(t)}+\alpha \overline{N(t)}]e^{i\lambda t}dt=\frac{1}{2}(M(t)+\alpha N(t),e^{i\lambda t}),
 \end{align}
 where
  \begin{equation}\label{x6s}
   f(\lambda):=-\lambda[\sin (\lambda a)-i\alpha\cos(\lambda a)]+(\omega+\beta)\cos(\lambda a)+i\alpha\omega\sin(\lambda a),
   \end{equation}
 \begin{equation}\label{x7}
   \overline{M(t)}=\left\{\begin{split}
                &K_x(a,t)+\beta K(a,t),\quad t\in(0,a),\\
               & K_x(a,-t)+\beta K(a,-t),\; t\in(-a,0),
                \end{split}\right.
 \end{equation}
  \begin{equation}\label{x8}
\overline{N(t)}=\left\{\begin{split}
                &-K_t(a,t),\quad t\in(0,a),\\
               & K_t(a,-t),\quad t\in(-a,0).
                \end{split}\right.\quad
 \end{equation}
 It is obvious that $M(t)$ is even and $N(t)$ is odd. Denote
 \begin{equation}\label{x9}
 w_{n + \nu}:=-f^{(\nu)}(\lambda_n),\quad  n\in \mathcal{S}_j,\quad \nu=0,1,\dots, m_n-1.
\end{equation}
Then we have
\begin{equation}\label{x10}
  \frac{1}{2}\left({M}+\alpha{N},u_n\right)=w_n,\quad n\in \mathbb{Z}_j^-.
\end{equation}

To deal with the multiple eigenvalues, we need the following lemma from~\cite{MW}.
\begin{lemma}\label{l2.3}
  Assume that $f(z)$ is an entire function, and $z_1$,..., $z_m$ (not necessarily distinct) are in the disk $\{z\colon |z-z_0|\le r<1/2\}$. Let $p(z)$ be the unique polynomial of degree at most $m-1$ which interpolates $f(z)$ and its derivatives in the usual way at the points $z_j$, $j={1,...,m}$: namely, if $z_j$ appears $m_j$ times, then $p^{(n)}(z_j)=f^{(n)}(z_j)$ for $n={0,...,m_j-1}$. Then for each $j={0,...,m-1}$,
  \begin{equation}\label{xqi}
    \left|f^{(j)}(z_0)-p^{(j)}(z_0)\right|\le C r^{m-j}\sup_{|z-z_0|=1}\left|f(z)\right|,
  \end{equation}
  here the constant $C$ depends only on $m$.
\end{lemma}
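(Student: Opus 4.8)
The plan is to represent the Hermite interpolation error by a Cauchy-type contour integral and then estimate it directly. After a translation I may assume $z_0=0$, so that every node satisfies $|z_k|\le r<1/2$. Introduce the monic node polynomial
\[
w(z):=\prod_{k=1}^{m}(z-z_k),
\]
which has degree $m$ and vanishes at each $z_k$ to the order equal to its multiplicity. The first step is to invoke the classical remainder formula: taking the circle $\Gamma=\{|\zeta|=1\}$, which encloses all the nodes as well as the origin (since $r<1/2<1$), the polynomial
\[
p(z)=\frac{1}{2\pi i}\oint_{\Gamma}\frac{f(\zeta)}{w(\zeta)}\cdot\frac{w(\zeta)-w(z)}{\zeta-z}\,d\zeta
\]
is exactly the Hermite interpolant of degree at most $m-1$, and
\[
f(z)-p(z)=\frac{w(z)}{2\pi i}\oint_{\Gamma}\frac{f(\zeta)}{w(\zeta)(\zeta-z)}\,d\zeta=:w(z)\,G(z),\qquad |z|<1.
\]

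The verification that this $p$ meets the prescribed interpolation conditions is where the structure of $w$ enters. Since $\frac{w(\zeta)-w(z)}{\zeta-z}$ is a polynomial of degree $m-1$ in $z$, the function $p$ has degree $\le m-1$; and since the remainder carries the factor $w(z)$, which vanishes to order $m_j$ at a node $z_j$ of multiplicity $m_j$, the difference $f-p$ vanishes there to at least the same order, giving $p^{(n)}(z_j)=f^{(n)}(z_j)$ for $n=0,\dots,m_j-1$. Uniqueness of the interpolant then forces $p$ to coincide with the polynomial in the statement, so the factorization $f-p=wG$ is legitimate.

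It remains to differentiate $j$ times and evaluate at the origin. By the Leibniz rule,
\[
f^{(j)}(0)-p^{(j)}(0)=\sum_{l=0}^{j}\binom{j}{l}w^{(l)}(0)\,G^{(j-l)}(0).
\]
For the node-polynomial factor, $w^{(l)}(0)=l!\,[z^l]w(z)$ is $l!$ times an elementary symmetric function of the $z_k$ of degree $m-l$, hence $|w^{(l)}(0)|\le l!\binom{m}{l}r^{m-l}\le l!\binom{m}{l}r^{m-j}$, where the last inequality uses $l\le j$ together with $r<1$. For the integral factor,
\[
G^{(i)}(0)=\frac{i!}{2\pi i}\oint_{\Gamma}\frac{f(\zeta)}{w(\zeta)\,\zeta^{i+1}}\,d\zeta,
\]
and since $|\zeta-z_k|\ge 1-|z_k|\ge 1-r>1/2$ on $\Gamma$, I get $|w(\zeta)|\ge(1-r)^m>2^{-m}$, whence $|G^{(i)}(0)|\le i!\,2^{m}\sup_{|\zeta|=1}|f(\zeta)|$. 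Substituting both estimates into the Leibniz sum collects a constant depending only on $m$ (since $j\le m-1$) and leaves the factor $r^{m-j}\sup_{|z-z_0|=1}|f(z)|$, which is precisely \eqref{xqi}.

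I expect the only genuinely delicate point to be the justification of the remainder formula together with the matching of derivatives at the multiple nodes; once the factorization $f-p=w\,G$ is in hand, every remaining estimate is elementary. The hypothesis $r<1/2$ then plays a single transparent role: it keeps the nodes uniformly inside the unit circle, so that $|w(\zeta)|$ stays bounded below by a constant on $\Gamma$ and the contour estimate for $G$ is uniform in the node configuration.
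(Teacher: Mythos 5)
Your proof is correct and complete: the Hermite remainder representation $f-p=wG$ with
\begin{equation*}
G(z)=\frac{1}{2\pi i}\oint_{|\zeta|=1}\frac{f(\zeta)}{w(\zeta)(\zeta-z)}\,d\zeta
\end{equation*}
is legitimate because $|w(\zeta)|\ge(1-r)^m>2^{-m}$ on the unit circle, the factor $w$ forces the interpolation conditions so that uniqueness identifies $p$, and the Leibniz estimate with $|w^{(l)}(0)|\le l!\binom{m}{l}r^{m-l}\le l!\binom{m}{l}r^{m-j}$ for $l\le j$ together with $|G^{(i)}(0)|\le i!\,(1-r)^{-m}\sup_{|\zeta|=1}|f(\zeta)|$ yields \eqref{xqi} with a constant depending only on $m$ (since $j\le m-1$). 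For comparison: the paper itself offers no proof of Lemma~\ref{l2.3} --- it is imported from \cite{MW} --- and the argument given there is essentially this same classical contour-integral interpolation-error estimate, so your proposal supplies in full the proof that the paper delegates to a citation.
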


Fix $\{\lambda_n\}_{n\in \mathbb{Z}_j^-}$ to be the subspectrum of the problem $L(q,h,\alpha,\beta)$.  To prove Theorem \ref{th4.1}, we shall use the data $\beta$ and $\{\tilde{\lambda}_n\}_{n\in \mathbb{Z}_j^-}$ to construct $\tilde{q}$ and $\tilde{h}$. We agree that, if a certain
symbol $\delta$ denotes an object related to the problem $L(q,h,\alpha,\beta)$, then
$\tilde{\delta}$ will denote an analogous object related to the sequence $\{\tilde{\lambda}_n\}_{n\in \mathbb{Z}_j^-}$.
The notation $C$ may stand for different positive constants depending only on the problem $L(q,h,\alpha,\beta)$ and on the subspectrum $\{ \lambda_n \}_{n\in\mathbb{Z}_j^-}$.

By virtue of (\ref{4.1}), the sequence $\{\tilde{\lambda}_n\}_{n\in \mathbb{Z}_j^-}$ also has the asymptotics (\ref{2.7}). Consequently, $\tilde{\alpha}=\alpha$ and $\tilde{P}=P$. Put $\tilde{\beta}=\beta$ and $\tilde{\omega}=\omega$. Note that multiplicities of $\lambda_n$ and $\tilde \lambda_n$ may be distinct. However, for sufficiently small $\varepsilon > 0$, the inclusion $\mathcal{S}_j\subseteq \tilde{\mathcal{S}}_j$ holds. In particular, $\tilde{m}_n=1$ for $|n|\ge n_0$.

Denote
 \begin{equation}\label{xs1}
 \tilde{u}_{n + \nu}(t):=(it)^\nu e^{i\tilde{\lambda}_nt},\quad \tilde{w}_{n + \nu}:=-f^{(\nu)}(\tilde{\lambda}_n),
\end{equation}
for $n\in \tilde{\mathcal{S}}_j$ and $\nu=0,1,\dots,\tilde{m}_n-1$.
Consider the system of equations
\begin{equation}\label{kam}
 \frac{1}{2} \left(\tilde{M}+\alpha \tilde{N},\tilde{u}_n\right)=\tilde{w}_n,\quad n\in \mathbb{Z}_j^-,
\end{equation}
where the unknown functions $\tilde{M}(t)$ and $\tilde{N}(t)$ are respectively even and odd.

Fix $k\in(-n_0,n_0)\cap \mathcal{S}_j$, and assume that the eigenvalue $\lambda_k$  with multiplicity $m_k$ corresponds to the numbers $\{\tilde{\lambda}_n\}_{n\in M_k}$, where $M_k:=\left\{k, k + 1, \ldots, k + m_k - 1\right\}$.
Define $\tilde{\mathcal{S}}_k^j=\tilde{\mathcal{S}}_j\cap M_k$.
It is obvious that the relation (\ref{kam}) for $n\in M_k$ can be rewritten as
\begin{equation}\label{kam1}
 \frac{1}{2} \left(\tilde{M}(t)+\alpha \tilde{N}(t),(it)^{\nu}e^{i\tilde{\lambda}_nt}\right)=-f^{(\nu)}(\tilde{\lambda}_n),\quad n\in \tilde{\mathcal{S}}_k^j,\quad \nu=0,1,...,\tilde{m}_n-1.
\end{equation}
For each fixed $t\in[-a,a]$, let $E_k(t,\lambda)$, $F_k(\lambda)$ be  the unique polynomials of degree at most $m_k-1$, respectively, interpolating $e^{i\lambda t}$ and $-f(\lambda)$ and their derivatives in the usual way at the points $\{\tilde{\lambda}_n\}_{n\in M_k}$. Namely,
\begin{equation*}
E_k^{(\nu)}(t,\tilde{\lambda}_n)= (it)^{\nu}e^{i\tilde{\lambda}_nt},\quad    F_k^{(\nu)}(\tilde{\lambda}_n)=-f^{(\nu)}(\tilde{\lambda}_n),\quad  \nu=0,1,...,\tilde{m}_n-1.
\end{equation*}
 It follows from (\ref{kam1}) that
\begin{equation}\label{kam2}
 \frac{1}{2} \left(\tilde{M}(\cdot)+\alpha \tilde{N}(\cdot),E_k^{(\nu)}(\cdot,\tilde{\lambda}_n)\right)=F_k^{(\nu)}(\tilde{\lambda}_n),\quad n\in \tilde{\mathcal{S}}_k^j,\quad \nu=0,1,...,\tilde{m}_n-1.
\end{equation}
Since $E_k(t,\lambda)$, $F_k(\lambda)$ are  the polynomials of degree at most $m_k-1$, we have
\begin{equation}\label{kam3}
 \frac{1}{2} \left(\tilde{M}(\cdot)+\alpha \tilde{N}(\cdot),E_k(\cdot,{\lambda})\right)=F_k(\lambda),\quad \lambda\in \mathbb{C}.
\end{equation}
In particular, we have
\begin{equation}\label{kam4}
 \frac{1}{2} \left(\tilde{M}(\cdot)+\alpha \tilde{N}(\cdot),E_k^{(\nu)}(\cdot,{\lambda}_k)\right)=F_k^{(\nu)}({\lambda}_k),\quad \nu=0,1,...,{m}_k-1.
\end{equation}
Define the sequence $\{\tilde{\tilde{u}}_n\}_{n\in \mathbb{Z}_j^-}$ as follows
\begin{equation}\label{kam5}
\begin{split}
  &\tilde{\tilde{u}}_{n + \nu}(t)=E_n^{(\nu)}(t,{\lambda}_n),\; \tilde{\tilde{w}}_{n + \nu}=F_n^{(\nu)}({\lambda}_n),\; |n|<n_0, n\in \mathcal{S}_j, \; \nu=0,1,...,{m}_n-1,\\
  &\tilde{\tilde{u}}_n(t)=e^{i\tilde{\lambda}_nt},\quad\tilde{\tilde{w}}_n={\tilde{w}}_n,\quad |n|\ge n_0.
\end{split}
\end{equation}
Then the system (\ref{kam}) is equivalent to
\begin{equation}\label{kam6}
 \frac{1}{2} \left(\tilde{M}+\alpha \tilde{N},\tilde{\tilde{u}}_n\right)=\tilde{\tilde{w}}_n,\quad n\in \mathbb{Z}_j^-,
\end{equation}

Let us prove the following two lemmas successively.
\begin{lemma}\label{l4.1}
There exists $\varepsilon>0 $ such that, for any sequence $\{\tilde{\lambda}_n\}_{n\in \mathbb{Z}_j^-}$ satisfying \eqref{4.1}, the following estimates hold
\begin{equation}\label{4.2}
\sqrt{  \sum_{n\in \mathbb{Z}_j^-}n^2\|u_n-\tilde{\tilde{u}}_n\|_{L^2(-a,a)}^2} \le C \Lambda,
\end{equation}
\begin{equation}\label{4.3}
\sqrt{  \sum_{n\in \mathbb{Z}_j^-}|w_n-\tilde{\tilde{w}}_n|^2} \le C \Lambda.
\end{equation}
\end{lemma}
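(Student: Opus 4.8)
The plan is to split each of the sums in \eqref{4.2} and \eqref{4.3} into the tail $|n|\ge n_0$, where by \eqref{kam5} we simply have $\tilde{\tilde{u}}_n(t)=e^{i\tilde{\lambda}_n t}$ and $\tilde{\tilde{w}}_n=\tilde{w}_n=-f(\tilde{\lambda}_n)$, and the finite block $|n|<n_0$, where $\tilde{\tilde{u}}_{n+\nu}$ and $\tilde{\tilde{w}}_{n+\nu}$ are built from the interpolating polynomials $E_n,F_n$. The tail is handled by elementary perturbation estimates, while the finite block is where the eigenvalue multiplicities enter and where Lemma~\ref{l2.3} is used.

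For the tail, first I would note that by the asymptotics \eqref{2.7} the numbers $\lambda_n$ and $\tilde{\lambda}_n$ both lie, for $|n|\ge n_0$, in a fixed horizontal strip, so $|e^{i\lambda_n t}|$ and $|e^{i\tilde{\lambda}_n t}|$ are uniformly bounded for $t\in[-a,a]$. Consequently $|e^{i\lambda_n t}-e^{i\tilde{\lambda}_n t}|\le C|t|\,|\lambda_n-\tilde{\lambda}_n|$, whence $\|u_n-\tilde{\tilde{u}}_n\|_{L^2(-a,a)}\le C|\lambda_n-\tilde{\lambda}_n|$. Summing $n^2\|u_n-\tilde{\tilde{u}}_n\|^2$ over $|n|\ge n_0$ and invoking \eqref{4.1} gives the tail contribution to \eqref{4.2}. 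For \eqref{4.3}, I would observe that in this strip the function $f$ of \eqref{x6s} satisfies $|f'(\lambda)|\le C(|n|+1)$ near $\lambda_n$ (its leading term $-\lambda a[\cos(\lambda a)+i\alpha\sin(\lambda a)]$ is $O(|\lambda|)$ there), so integrating $f'$ along the segment joining $\tilde{\lambda}_n$ to $\lambda_n$ yields $|w_n-\tilde{\tilde{w}}_n|=|f(\lambda_n)-f(\tilde{\lambda}_n)|\le C(|n|+1)|\lambda_n-\tilde{\lambda}_n|$; squaring, summing, and using the weight $n^2+1$ in \eqref{4.1} again controls the tail.

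For the finite block $|n|<n_0$, the key is to read off from \eqref{s1} and \eqref{x9} that $u_{n+\nu}(t)=\partial_\lambda^\nu e^{i\lambda t}\big|_{\lambda=\lambda_n}$ and $w_{n+\nu}=-f^{(\nu)}(\lambda_n)$, whereas by \eqref{kam5} the objects $\tilde{\tilde{u}}_{n+\nu}(t)=E_n^{(\nu)}(t,\lambda_n)$ and $\tilde{\tilde{w}}_{n+\nu}=F_n^{(\nu)}(\lambda_n)$ are the $\nu$-th $\lambda$-derivatives, at $\lambda_n$, of the polynomials interpolating $e^{i\lambda t}$ and $-f(\lambda)$ at the split points $\{\tilde{\lambda}_m\}_{m\in M_n}$. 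Since by Assumption~($\mathcal N$) one has $\lambda_m=\lambda_n$ for every $m\in M_n$, the interpolation radius is $r:=\max_{m\in M_n}|\tilde{\lambda}_m-\lambda_n|=\max_{m\in M_n}|\tilde{\lambda}_m-\lambda_m|\le C\Lambda$, which is below $1/2$ once $\varepsilon$ is small. Applying Lemma~\ref{l2.3} to the entire functions $\lambda\mapsto e^{i\lambda t}$ (for fixed $t\in[-a,a]$) and $\lambda\mapsto -f(\lambda)$, with $z_0=\lambda_n$, $m=m_n$, $j=\nu$, gives
\begin{equation*}
|u_{n+\nu}(t)-\tilde{\tilde{u}}_{n+\nu}(t)|\le C r^{m_n-\nu},\qquad |w_{n+\nu}-\tilde{\tilde{w}}_{n+\nu}|\le C r^{m_n-\nu},
\end{equation*}
where the suprema $\sup_{|\lambda-\lambda_n|=1}|e^{i\lambda t}|$ and $\sup_{|\lambda-\lambda_n|=1}|f(\lambda)|$ are absorbed into $C$ because $n$ ranges over a finite set and $t\in[-a,a]$. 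As $\nu\le m_n-1$ forces $m_n-\nu\ge1$, we have $r^{m_n-\nu}\le r\le C\Lambda$, and summing the finitely many terms of the block yields the required $C\Lambda$ bound in both \eqref{4.2} and \eqref{4.3}.

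The main obstacle is precisely the finite block: it must be treated so as to allow a multiple eigenvalue $\lambda_n$ to split into a cluster $\{\tilde{\lambda}_m\}_{m\in M_n}$ of possibly lower multiplicities. The crux is the identification of $u_{n+\nu}$, $w_{n+\nu}$ as the Hermite data of $e^{i\lambda t}$ and $-f(\lambda)$ at $\lambda_n$ and the verification that the interpolation radius is $O(\Lambda)$; the gain of the full power $r^{m_n-\nu}\ge r$ in Lemma~\ref{l2.3} is what produces the linear (rather than fractional) estimate and thereby the improvement over \cite{X}.
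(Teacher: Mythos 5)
Your proposal is correct and follows essentially the same route as the paper: the same splitting into the tail $|n|\ge n_0$ (handled by elementary bounds on $e^{i\lambda t}$ and $f$ in the horizontal strip, which is what the paper's appeal to the Schwarz lemma accomplishes) and the finite block $|n|<n_0$ (handled by the Marletta--Weikard interpolation estimate, Lemma~\ref{l2.3}, applied to $e^{i\lambda t}$ and $-f(\lambda)$ with $z_0=\lambda_n$ and radius $r\le\Lambda$). Your write-up is in fact somewhat more explicit than the paper's in identifying $u_{n+\nu}$, $w_{n+\nu}$ as Hermite data and in noting that $r^{m_n-\nu}\le r$ is what yields the linear bound, but the argument is the same.
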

\begin{proof}
Using the Schwarz's lemma (see, e.g., \cite[p.51]{FY}), one can obtain
\begin{equation}\label{4.4}
    |e^{i\lambda_nt}|\le C,\quad |e^{i\lambda_nt}-e^{i\tilde{\lambda}_nt}|\le C|\lambda_n-\tilde{\lambda}_n|,\quad t\in [-a,a],\quad |n|\ge n_0.
\end{equation}
Substituting (\ref{4.4}) into  (\ref{s1}), we get
\begin{equation*}
  \|u_n-\tilde{\tilde{u}}_n\|_{L^2}\le C |\lambda_n-\tilde{\lambda}_n|,\quad |n|\ge n_0,
\end{equation*}
which implies
\begin{equation}\label{kam7}
 {  \sum_{|n|\ge n_0}n^2\|u_n-\tilde{\tilde{u}}_n\|_{L^2(-a,a)}^2}<C^2 \Lambda^2.
\end{equation}
Substituting (\ref{4.4}) into (\ref{x9}), we have
 \begin{equation*}
  |w_n-\tilde{\tilde{w}}_n|\le C n|\lambda_n-\tilde{\lambda}_n|, \quad |n|\ge n_0,
\end{equation*}
which implies
\begin{equation}\label{kam8}
{  \sum_{|n|\ge n_0}|w_n-\tilde{\tilde{w}}_n|^2}<C^2 \Lambda^2.
\end{equation}
Now let us consider $|n|<n_0$. By the definitions of $E_n(t,\lambda)$ and $F_n(\lambda)$, using Lemma \ref{l2.3}, we have that for each fixed $k\in(-n_0,n_0)\cap \mathcal{S}_j$,
$$
 \left|E_k^{(\nu)}(t,\lambda_k)-(it)^\nu e^{i\lambda_kt}\right|\le C\max_{n\in \tilde{\mathcal{S}}_k^j}|\tilde{\lambda}_n-\lambda_k|,\quad \nu=0,1,...,{m}_k-1,
$$
$$
 \left|F_k^{(\nu)}(\lambda_k)+f^{(\nu)}(\lambda_k)\right|\le C \max_{n\in \tilde{\mathcal{S}}_k^j}|\tilde{\lambda}_n-\lambda_k|,\quad \nu=0,1,...,{m}_k-1,
$$
for sufficient small $\varepsilon>0$. Thus
\begin{equation}\label{kam10}
 \sum_{n\in M_k} \|\tilde{\tilde{u}}_n-u_n\|_{L^2(-a,a)}\le C\max_{n\in \tilde{\mathcal{S}}_k^j}|\tilde{\lambda}_n-\lambda_k|,\quad |k|<n_0,\;k\in \mathcal{S}_j,
\end{equation}
\begin{equation}\label{kam11}
 \sum_{n\in M_k} |\tilde{\tilde{w}}_n-w_n|\le C\max_{n\in \tilde{\mathcal{S}}_k^j}|\tilde{\lambda}_n-\lambda_k|,\quad |k|<n_0,\;k\in \mathcal{S}_j.
\end{equation}
It follows that
\begin{equation}\label{kam12}
 {  \sum_{|n|<n_0}n^2\|u_n-\tilde{\tilde{u}}_n\|_{L^2(-a,a)}^2}<C^2 \Lambda^2,
\end{equation}
\begin{equation}\label{kam13}
 {  \sum_{|n|<n_0}n^2|w_n-\tilde{\tilde{w}}_n|^2}<C^2 \Lambda^2.
\end{equation}
Together with (\ref{kam7}), (\ref{kam12}) and (\ref{kam8}), (\ref{kam13}), we arrive at (\ref{4.2}) and (\ref{4.3}), respectively.
\end{proof}

\begin{lemma}\label{l4.2}
  There exists $\varepsilon> 0$ such that, for any sequence $\{ \tilde \lambda_n \}_{n \in \mathbb Z_j^-}$ satisfying \eqref{4.1},
there exists a unique pair of functions $\tilde{M}(t)$ and $ \tilde{N}(t)$ in $ L^2(-a,a)$  satisfying the relation (\ref{kam}), where $M(t)$ is odd and $N(t)$ is even.
Moreover,
\begin{equation}\label{4.6}
\|M - \tilde{M} \|_{L^2(-a,a)}+\|N - \tilde{N} \|_{L^2(-a,a)} \le C \Lambda.
\end{equation}
\end{lemma}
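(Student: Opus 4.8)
The plan is to reduce the lemma to a single moment problem for the combined function $\tilde g:=\tilde M+\alpha\tilde N\in L^2(-a,a)$, and then to recover $\tilde M$ and $\tilde N$ from their parity. Since $\tilde M$ is even and $\alpha\tilde N$ is odd, any $\tilde g\in L^2(-a,a)$ determines such a pair uniquely through $\tilde M(t)=\tfrac12\bigl(\tilde g(t)+\tilde g(-t)\bigr)$ and $\tilde N(t)=\tfrac1{2\alpha}\bigl(\tilde g(t)-\tilde g(-t)\bigr)$; these parity projections are bounded, so $\|\tilde M-M\|_{L^2(-a,a)}+\|\tilde N-N\|_{L^2(-a,a)}\le C\|\tilde g-g\|_{L^2(-a,a)}$ with $g:=M+\alpha N$. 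By the equivalence of \eqref{kam} and \eqref{kam6}, it thus suffices to find a unique $\tilde g\in L^2(-a,a)$ with $\tfrac12(\tilde g,\tilde{\tilde u}_n)=\tilde{\tilde w}_n$ for all $n\in\mathbb Z_j^-$ and to prove $\|\tilde g-g\|_{L^2(-a,a)}\le C\Lambda$; here $g$ satisfies the unperturbed relation \eqref{x10}, i.e. $(g,u_n)=2w_n$.

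The central step is to show that $\{\tilde{\tilde u}_n\}_{n\in\mathbb Z_j^-}$ is itself a Riesz basis of $L^2(-a,a)$. By Lemma~\ref{l2.2} the system $\{u_n\}$ is a Riesz basis, and discarding the weight $n^2\ge1$ in \eqref{4.2} gives $\sum_{n}\|u_n-\tilde{\tilde u}_n\|_{L^2(-a,a)}^2\le C\Lambda^2\le C\varepsilon^2$. Hence $\{\tilde{\tilde u}_n\}$ is quadratically close to the Riesz basis $\{u_n\}$, and by the classical stability theorem for Riesz bases under small $\ell^2$-perturbations it is again a Riesz basis for $\varepsilon$ small enough, with lower and upper bounds that may be taken uniformly close to those of $\{u_n\}$; in particular it is complete and possesses a biorthogonal Riesz basis $\{\tilde{\tilde u}_n^*\}$. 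I expect this to be the main obstacle, and it is precisely where the multiplicity effect is absorbed: the objects $\tilde{\tilde u}_n$ were constructed in \eqref{kam5} from the interpolating polynomials of Lemma~\ref{l2.3} exactly so that they remain $L^2$-close to $u_n$ even when a multiple $\lambda_k$ splits into several distinct $\tilde\lambda_n$, which is what keeps the perturbation small and the Riesz-basis argument valid.

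Existence and uniqueness of $\tilde g$ then follow from the moment formalism. Because $\{u_n\}$ is a Riesz basis, it is a Bessel sequence, so $\{w_n\}=\{\tfrac12(g,u_n)\}\in\ell^2$, and \eqref{4.3} gives $\{\tilde{\tilde w}_n-w_n\}\in\ell^2$; hence $\{\tilde{\tilde w}_n\}\in\ell^2$. Expanding in the biorthogonal basis $\{\tilde{\tilde u}_n^*\}$ then yields a unique $\tilde g\in L^2(-a,a)$ whose inner products against $\{\tilde{\tilde u}_n\}$ equal $\{2\tilde{\tilde w}_n\}$, which gives existence; uniqueness is immediate from completeness of $\{\tilde{\tilde u}_n\}$. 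This $\tilde g$ determines $\tilde M$ and $\tilde N$ uniquely as in the first paragraph, so it provides the required solution of \eqref{kam}.

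It remains to estimate $\|\tilde g-g\|_{L^2(-a,a)}$. I would write $(\tilde g-g,\tilde{\tilde u}_n)=2(\tilde{\tilde w}_n-w_n)+(g,u_n-\tilde{\tilde u}_n)$, bound $|(g,u_n-\tilde{\tilde u}_n)|\le\|g\|_{L^2(-a,a)}\|u_n-\tilde{\tilde u}_n\|_{L^2(-a,a)}$, and sum the squares, using \eqref{4.2} and \eqref{4.3} to obtain $\sum_{n}|(\tilde g-g,\tilde{\tilde u}_n)|^2\le C\Lambda^2$. The lower Riesz bound of $\{\tilde{\tilde u}_n\}$, uniform for small $\varepsilon$ by the second paragraph, then gives $\|\tilde g-g\|_{L^2(-a,a)}^2\le C\sum_{n}|(\tilde g-g,\tilde{\tilde u}_n)|^2\le C\Lambda^2$, and combining this with the first paragraph yields \eqref{4.6}. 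Only the unweighted consequences of \eqref{4.2}--\eqref{4.3} enter here; the $n^2$-weight will be needed afterwards, when these functions are passed into the Cauchy-data problem of Theorem~\ref{thca}.
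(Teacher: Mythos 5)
Your proposal is correct and, in substance, it is the paper's own proof: the paper likewise reduces \eqref{kam} (via its equivalence with \eqref{kam6}) to a single moment problem for $\tilde{U}=\frac{1}{2}(\tilde{M}+\alpha\tilde{N})$ (your $\frac{1}{2}\tilde{g}$), solves that problem using the closeness estimates of Lemma~\ref{l4.1}, and then recovers $\tilde{M},\tilde{N}$ by exactly your parity projections. The one structural difference is that the paper delegates the entire moment-problem step --- existence, uniqueness, and the bound $\|\tilde{U}-\frac{1}{2}(M+\alpha N)\|\le C\Lambda$ --- to Proposition~4.1 of \cite{X}, whereas you prove it inline: quadratic closeness to the Riesz basis $\{u_n\}$ of Lemma~\ref{l2.2} implies $\{\tilde{\tilde{u}}_n\}$ is again a Riesz basis (Bari-type perturbation theorem), then biorthogonal expansion gives existence and uniqueness of $\tilde{g}$, and the frame lower bound gives the stability estimate. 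That is precisely the content, and the standard proof, of the cited proposition, so you have opened the black box rather than taken a genuinely different route.

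One small inaccuracy: for $j=0$ the index $n=0$ belongs to $\mathbb{Z}_0^-$, and there the weight in \eqref{4.2} is $n^2=0$, so \eqref{4.2} by itself gives no control of $\|u_0-\tilde{\tilde{u}}_0\|_{L^2(-a,a)}$, and your phrase ``discarding the weight $n^2\ge 1$'' is not literally valid at that index. This is harmless: estimate \eqref{kam10} in the proof of Lemma~\ref{l4.1} bounds each of the finitely many terms with $|n|<n_0$ (including $n=0$) by $C\Lambda$, which yields the unweighted quadratic closeness you need; you should invoke that (or note that \eqref{4.1} carries the weight $n^2+1$) for the low indices rather than \eqref{4.2}. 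Incidentally, you silently corrected the parity typo in the lemma's statement (``$M$ odd, $N$ even'') to the convention used everywhere else in the paper ($\tilde{M}$ even, $\tilde{N}$ odd), which is the right reading.
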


\begin{proof}
Using Proposition~4.1 from \cite{X} together with Lemma \ref{l4.1}, we conclude that there exists a unique function $\tilde{U}\in L^2(-a,a)$ such that  $(\tilde{U},  \tilde{\tilde{u}}_n) =  \tilde{\tilde{w}}_n$, $n \in \mathbb Z_j^-$ and $\|\tilde{U}-\frac{1}{2}({M+\alpha{N}}) \| \le C \Lambda$. Denote
$${\tilde{M}(t)}:={\tilde{U}(t)+\tilde{U}(-t)},\quad {\tilde{N}(t)}:=\frac{\tilde{U}(t)-\tilde{U}(-t)}{\alpha}.$$
Then $\tilde{U}(t)=\frac{1}{2}\left({\tilde{M}(t)}+\alpha{ \tilde{N}(t)}\right)$, and $\tilde{M}(t)$ is even and $\tilde{N}(t)$ is odd. Thus the system (\ref{kam6}) is satisfied, which is equivalent to (\ref{kam}). By a direct calculation, we can obtain (\ref{4.6}).
\end{proof}

Define the functions $\tilde{\Delta}(\lambda)$, $\tilde{K}_x(a,t)$ and $\tilde{K}_t(a,t)$ with the functions $\tilde{M}(t)$ and $\tilde{N}(t)$:
\begin{equation}\label{4.7}
 \tilde{ \Delta}(\lambda)=f(\lambda)+\frac{1}{2}\int_{-a}^a[\overline{\tilde{M}(t)}+\alpha \overline{\tilde{N}(t)}]e^{i\lambda t}dt,
\end{equation}
\begin{equation}\label{x12}
\tilde{K}_x(a,t):=\overline{\tilde{M}(t)}-{\beta}\int_t^a\overline{\tilde{N}(s)}ds-\beta \omega,\quad \tilde{K}_t(a,t):=-\overline{\tilde{N}(t)},\quad t\in[0,a].
\end{equation}
Clearly, $\tilde{K}_x(a,t)$ is even, and $\tilde{K}_t(a,t)$ is odd.
It follows from (\ref{kam}) and (\ref{4.7}) that $\{\tilde{\lambda}_n\}_{n\in\mathbb{Z}_j^-}$ (with multiplicities) are the zeros of $\tilde{\Delta}(\lambda)$.
By the Schwarz's inequality, we calculate
\begin{equation*}
  \int_0^a \left|\int_t^a(\tilde{N}(s)-N(s))ds\right|^2dt\le a \left(\int_0^a \left|\tilde{N}(s)-N(s))\right|ds\right)^2\le a^2\|\tilde{N}-N\|_{L^2(0,a)}^2.
\end{equation*}
It follows from (\ref{4.6}) and (\ref{x12}) that
\begin{equation}\label{4.10}
\left\|\tilde{K}_x(a,\cdot)-K_x(a,\cdot)\right\|_{L^2(0,a)}+\left\|\tilde{K}_t(a,\cdot)-K_t(a,\cdot)\right\|_{L^2(0,a)}\le C\Lambda.
\end{equation}
Using Theorem \ref{cau}  and (\ref{4.10}), we get that there exist the unique pair of $\tilde{q}$ and $\tilde{h}$ such that  $\{\tilde{K}_x(a,t), \tilde{K}_t(a,t),\omega\}$ are the corresponding Cauchy data. Moreover,
$$
\|q-\tilde{q}\|_{L^{2}(0, a)} \leq C \Lambda,\quad |\tilde{h}-h|\leq C \Lambda.
$$

Define the functions
\begin{equation}\label{2.1q}
  \tilde{\varphi}(a,\lambda)=\cos (\lambda a)+\frac{\omega\sin(\lambda a)}{\lambda}-\int_{0}^a\tilde{K}_t(a,t)\frac{\sin (\lambda t)}{\lambda}dt,
\end{equation}
\begin{equation}\label{2.2q}
  \tilde{\varphi}'(a,\lambda)=-\lambda\sin (\lambda a)+{\omega\cos(\lambda a)}+\int_{0}^a\tilde{K}_x(a,t)\cos (\lambda t)dt.
\end{equation}
Using (\ref{4.7}), (\ref{x12}), (\ref{2.1q}), (\ref{2.2q}), and (\ref{x6s}), we obtain that the function $\tilde{\Delta}(\lambda)$ constructed in \eqref{4.7}
 has the expression
 \begin{equation}\label{a4.9}
\tilde{ \Delta}(\lambda)=\tilde{\varphi}'(a,\lambda)+(i\lambda \alpha+\beta)\tilde{\varphi}(a,\lambda).
\end{equation}
The proof of Theorem \ref{th4.1} is complete.
\\[2mm]

\noindent {\bf Acknowledgments.}
The author Xu was supported  by the National Natural Science Foundation of China (11901304). The author Bondarenko was supported by Grants 20-31-70005 and 19-01-00102 of the Russian Foundation for Basic Research.

\end{document}